\renewcommand{\phi}{\varphi}
\newcommand{\cI}{\mathcal{I}}
\renewcommand{\Vec}{\mathrm{Vec}}
\newcommand{\VecZ}{\mathrm{Vec}_\ZZ}
\newcommand{\comment}[1]{}
\DeclareMathOperator{\bigw}{\textstyle{\bigwedge}}
\begin{document}

\title{Polynomials and tensors of bounded strength}

\author{Arthur Bik}
\address{Universit\"at Bern, Mathematisches Institut,
Alpeneggstrasse 22,
3012 Bern, Switzerland}
\email{arthur.bik@math.unibe.ch}

\author{Jan Draisma}
\address{Universit\"at Bern, Mathematisches Institut,
Sidlerstrasse 5,
3012 Bern, Switzerland; and Eindhoven University of Technology}
\email{jan.draisma@math.unibe.ch}

\author{Rob H.~Eggermont}
\address{Eindhoven University of Technology, Department of
Mathematics and Computer Science, P.O.~Box 513, 5600 MB
Eindhoven, The Netherlands}
\email{r.h.eggermont@tue.nl}

\begin{abstract}
Notions of rank abound in the literature on tensor decomposition. We prove
that {\em strength}, recently introduced for homogeneous polynomials by
Ananyan-Hochster in their proof of Stillman's conjecture and generalised
here to other tensors, is universal among these ranks in the following
sense: any non-trivial Zariski-closed condition on tensors that is
functorial in the underlying vector space implies bounded strength. This
generalises a theorem by Derksen-Eggermont-Snowden on cubic polynomials,
as well as a theorem by Kazhdan-Ziegler which says that a polynomial all
of whose directional derivatives have bounded strength must itself have
bounded strength.
\end{abstract}

\thanks{The authors were supported by the NWO
Vici grant entitled {\em Stabilisation in Algebra and Geometry}.}
\maketitle

\section{Introduction and main theorem}

For any Zariski-closed affine cone $X$ that spans a vector space $V$,
the $X$-rank of $v \in V$ is the minimal number of terms across
all expressions of $v$ as a sum of vectors in $X$ \cite[Section
5.2.1]{Landsberg12}. If $X$ is the cone over a Veronese embedding of a
projective space, then the $X$-rank is the Waring rank; if it is the cone over
the Segre embedding of a product of projective spaces, then the $X$-rank is
the ordinary tensor rank; and if $X$ is the (reducible) variety of $d$-way
tensors that are a product of a vector with a $(d-1)$-way tensor, then
the $X$-rank is the slice rank \cite{Tao16}. Each of these ranks behaves functorially
in the underlying vector space(s) and is lower-bounded by the {\em
strength} that we will introduce below. In particular, in each of these
cases, bounded $X$-rank implies bounded strength.

This is not a coincidence. In this paper we establish that if the type
of tensors is fixed but the underlying vector space(s) are not, then
{\em any} non-trivial Zariski-closed condition that is functorial in the
underlying vector space(s) implies bounded strength. Note that bounded
$X$-rank is typically not a closed condition, but our result does apply
to its closure, bounded {\em border} $X$-rank.

We set the stage by discussing our result in detail for homogeneous
polynomials (symmetric tensors), and then treat the cases of alternating
and ordinary tensors more succinctly. We have chosen this case-by-case
treatment, rather than a more uniform treatment via polynomial functors,
to make the paper more immediately useful for researchers in tensor
decomposition. Moreover, in the symmetric and alternating cases there are
conditions on the characteristic of the ground field that are not needed
in the case of ordinary tensors; and finally, this case-by-case treatment
allows us to give explicit case-by-case bounds.

\subsection{Strength of homogeneous polynomials}
Let $K$ be a perfect and infinite field, write $\Vec=\Vec_K$ for the category of
finite-dimensional $K$-vector spaces, let $V \in \Vec$, and let $d \in
\ZZ_{\geq 2}$. We write $S^d V$ for the $d$-th symmetric power of $V$,
and we informally refer to elements of $S^d V$ as homogeneous polynomials
of degree $d$ in $\dim V$ variables.

This paper concerns decompositions of $q \in S^d V$ of the form
\begin{equation} \label{eq:Decomp} \tag{*}
q=r_1 s_1 + \cdots + r_k s_k
\end{equation}
where $r_i \in S^{e_i} V$ and $s_i \in S^{d-e_i} V$ for suitable natural
numbers $e_i \in \{1,\ldots,d-1\}$. The minimal number $k$ of terms
among all such decompositions of $q$ is called the {\em strength} $S(q)$
of $q$. This term was introduced in \cite{Hochster16}, except that we have taken
the liberty of adding $1$ to the strength defined there---so that, for
instance, our strength is subadditive. By taking all $e_i$ equal to $1$,
we obtain the bound $S(q) \leq \dim V$; but we will be interested in
upper bounds that do not depend on $\dim V$.

\subsection{Closed subsets of $S^d$} \label{ssec:ClosedPoly}
We equip $S^d V$ with the Zariski topology. A {\em closed subset} of $S^d$
is a rule $X$ that assigns to every $V \in \Vec$
a closed subset $X(V)$ of $S^d V$ in such a manner that for any linear
map $\phi \in \Hom_\Vec(V,W)$ the $d$-th
symmetric power $S^d \phi$ maps $X(V)$ into $X(W)$. In particular, if
we set $X_n:=X(K^n)$, then $X_n$ is stable under the group $\GL_n$,
and the map $S^d \pi: S^d K^n \to S^d K^{n-1}$ coming from the projection
$\pi: K^n \to K^{n-1}$ forgetting the last coordinate maps $X_n$ into
$X_{n-1}$---indeed, {\em onto} $X_{n-1}$, as one sees using the $d$-th
symmetric power of any section $K^{n-1} \to K^n$ of $\pi$.

\subsection{Examples}

\begin{ex}
Let $d=3$, fix $k \in \ZZz$, and let $Y_{3,k}(V) \subseteq S^d V$ be the
set of all elements of strength at most some fixed number $k$, i.e.,
the set of cubics that can be expressed as a sum of $k$ products of a
linear form and a quadric. In \cite{Derksen17} it is proved that $Y_{3,k}(V)$
is Zariski-closed. Since $S^d \phi$ maps a decomposition~\eqref{eq:Decomp} into
another such decomposition, $Y_{3,k}$ is a closed subset of $S^3$. \hfill
$\clubsuit$
\end{ex}

\begin{ex} \label{ex:Strength}
For arbitrary $d$, we do not know whether the set of elements in $S^dV$
of strength at most $k$ is closed. Let $Y_{d,k}(V)$ be its closure,
the set of elements in $S^d V$ of {\em border strength} at most $k$;
then $Y_{d,k}$ is a closed subset of $S^d$.  The parameterisation
\eqref{eq:Decomp} shows that the topological dimension of $Y_{d,k}(V)$
is at most a polynomial in $\dim V$ of degree $d-1$. As $\dim S^d V$ has
degree $d$ in $\dim V$, for $\dim V \gg 0$ the closed subset $Y_{d,k}(V)$
is not equal to $S^d V$.  This conclusion is abbreviated as $Y_{d,k}
\subsetneq S^d$.  \hfill $\clubsuit$
\end{ex}

Note that though the set of elements in $S^dV$ of strength at most $k$ is not necessarily closed, we will find (by means of Theorem~\ref{thm:Poly}) that its closure only contains elements of strength at most $N$ for some $N \geq k$ (provided we work over a field of characteristic $0$ or characteristic greater than $d$).

\begin{ex} \label{ex:KazhdanZiegler}
The paper \cite{Kazhdan18} concerns polynomials all of whose directional
derivatives have bounded strength. In our notation, this leads to the
closed set $Z_{d,k} \subseteq S^d$ defined by
\[ Z_{d,k}(V):=\left\{q \in S^d V ~\middle|~ \forall x \in
V^*:\  \langle x,q \rangle \in Y_{d-1,k}(V)\right\}, \]
where $\langle .,. \rangle : V^* \times S^d V \to S^{d-1} V$ is the
natural pairing. This set is potentially larger than the one in
\cite{Kazhdan18} since we allow directional derivatives to have strength
larger than $k$, as long as their border strength is at most $k$.

If, as in \cite{Kazhdan18}, $\cha K=0$ or $\cha K>d$, then for every
nonzero $x \in V^*$ the linear map $\langle x,. \rangle: S^d V \to
S^{d-1} V$ is surjective. Hence if $Y_{d-1,k}(V) \subsetneq S^{d-1} V$,
then also $Z_{d,k}(V) \subsetneq S^d V$. As the former happens for all $V$
with $\dim V \gg 0$, so does the latter, so $Z_{d,k} \subsetneq S^d$.
\hfill $\clubsuit$
\end{ex}

\subsection{Main theorem for polynomials}

Our main theorem is a converse to Example~\ref{ex:Strength}:
under mild conditions, in {\em any} closed $X \subsetneq S^d$ the strength
is uniformly bounded, independently of $\dim V$.

\begin{thm} \label{thm:Poly}
Fix $d \in \ZZ_{\geq2}$ and assume that $K$ is a perfect and infinite field
with $\cha K=0$ or $\cha K>d$. Then for any closed $X \subsetneq S^d$
there exists an $N$ such that for all finite-dimensional vector spaces
$V$ the strength of all elements in $X(V)$ is at most $N$.
\end{thm}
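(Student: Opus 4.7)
The plan is an induction on the degree $d$. The base case $d=2$ exploits the hypothesis $\cha K \neq 2$: symmetric bilinear forms on $V=K^n$ admit diagonalisation, so $\GL(V)$-orbits on $S^2 V$ are parameterised by rank. Each closed $X(V)$ is a union of orbit closures, hence a rank-$\leq r_n$ locus. Functoriality under sections $K^n \hookrightarrow K^{n+1}$ gives $r_n \leq r_{n+1}$, while functoriality under projections $K^{n+1} \twoheadrightarrow K^n$, applied to a diagonalised maximal-rank representative in $X(K^{n+1})$ involving only the first $r_{n+1}$ coordinates (possible as long as $r_{n+1} \leq n$), gives $r_{n+1} \leq r_n$. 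The remaining case $r_{n+1}=n+1$ would force $X(K^{n+1})=S^2 K^{n+1}$ and hence, by the projection argument, $X=S^2$, contradicting the hypothesis. Thus $r_n$ stabilises to some $r$, and strength is then uniformly bounded by $r$.

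For the inductive step, let $X \subsetneq S^d$ be closed and introduce the derivative closure
\[
\partial X(V) := \overline{\{\langle x, q \rangle : q \in X(V),\ x \in V^*\}} \subseteq S^{d-1}V,
\]
which one checks is a closed subset of $S^{d-1}$. When $\partial X \subsetneq S^{d-1}$, the inductive hypothesis yields $\partial X \subseteq Y_{d-1,N'}$ for some $N'$, so $X \subseteq Z_{d,N'}$ in the notation of Example \ref{ex:KazhdanZiegler}. Bounding the strength on $Z_{d,N'}$ is then a \emph{border-strength} generalisation of the theorem of \cite{Kazhdan18}, which I would prove by adapting the Kazhdan--Ziegler argument: the characteristic hypothesis underwrites Euler's identity $d\cdot q=\sum_i x_i \partial_{x_i}q$, allowing $q$ to be recovered from its partial derivatives, and a Noetherian/limiting argument would handle the passage from strength to border strength.

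The main obstacle is that $\partial X = S^{d-1}$ can hold even when $X \subsetneq S^d$. A telling example is $X(V)=\overline{\{\ell r : \ell \in V,\ r \in S^{d-1}V\}}$, the closure of degree-$d$ forms with a linear factor. Here the linear map $r \mapsto r+x_1\partial_{x_1}r$ acts on monomials of $x_1$-degree $a$ as multiplication by $1+a$, which is invertible under $\cha K>d$, so every $r \in S^{d-1}V$ equals $\partial_{x_1}(x_1 r')$ for some $r'$, giving $\partial X = S^{d-1}$; yet elements of this $X$ trivially have strength $\leq 1$. The derivative reduction thus loses information, and the induction must be strengthened. A natural refinement is to induct on a pair $(d,e)$ with $e$ an auxiliary invariant measuring how $X$ sits inside a polynomial subfunctor, together with a structural dichotomy: either $X$ admits a proper derivative closure in $S^{d-1}$ and the argument above applies, or $X$ is the image of a functorial parametrisation by a strictly smaller polynomial functor, from which a strength bound follows directly by counting the factors of the parametrisation. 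Making this dichotomy precise and compatible with Zariski closure is where the essential technical difficulty lies.
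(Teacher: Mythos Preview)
Your proposal contains a genuine gap, and you identify it yourself: the induction on $d$ via the derivative functor $\partial X$ breaks down when $\partial X = S^{d-1}$, and your example $X = Y_{d,1}$ shows this case cannot be excluded. The suggested repair---a dichotomy governed by an unspecified auxiliary invariant $e$, whose second branch asserts that $X$ is ``the image of a functorial parametrisation by a strictly smaller polynomial functor''---is a wish rather than an argument. There is no evident mechanism forcing every proper closed $X \subsetneq S^d$ with $\partial X = S^{d-1}$ to arise from such a parametrisation, and producing one looks as hard as the theorem itself. (In the favourable case $\partial X \subsetneq S^{d-1}$ your reduction is actually cleaner than you state: the induction hypothesis applied to $\partial X$ bounds the \emph{actual} strength of every directional derivative, so the original Kazhdan--Ziegler theorem applies directly and no ``border-strength generalisation'' is needed.)

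The paper's proof is structurally different and sidesteps the obstacle entirely. It does \emph{not} induct on $d$. Instead one fixes once and for all a $U$ with $X(U) \subsetneq S^d U$ and inducts on $\delta_X$, the minimal degree of a nonzero polynomial $f$ in the ideal of $X(U)$. The derivative that drives the induction is a directional derivative of the \emph{equation} $f$ in a direction $u^d \in S^d U$, producing $h$ of degree $\delta_X - d$; the sublocus $Y \subseteq X$ where $h \circ S^d\phi$ vanishes for every $\phi \in \Hom(V,U)$ has $\delta_Y < \delta_X$ and is handled by the induction hypothesis. For the complement $Z = X \setminus Y$ one shifts to $X(U \oplus V)$ and expands $f\bigl(S^d(1_U \oplus t\phi_x)\,q\bigr)$ as a polynomial in $t$: the coefficient of $t^d$ expresses the top component $q_d \in S^d V$ of any $q \in Z'(V)$, up to the nonzero scalar $h(q_0)$, as the value of a $\GL(V)$-equivariant polynomial map $\Psi\colon Q'(V) \to S^d V$. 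Every such covariant is a polynomial combination of the lower pieces $q_1,\ldots,q_{d-1}$ with $q_i \in S^{d-i}U \otimes S^i V$, and counting which factors can occur bounds $S(q_d)$ by a function of $d$ and $\dim U$ alone. In this framework Kazhdan--Ziegler is recovered as a corollary rather than consumed as an input.
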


The theorem has a reformulation in terms of limits.  Set
$S^d_\infty:=\varprojlim S^d K^n$, the inverse limit under the maps
mentioned in \ref{ssec:ClosedPoly}.  The notion of strength extends
naturally to $S^d_\infty$, except that it can now be
infinite.

\begin{ex}
In \cite{Derksen17} it is proved that the polynomial
\[ f = x_1y_1z_1 + \dots + x_ny_nz_n \in S^3 K^{3n}, \]
where $x_1,y_1,z_1,\dots,x_n,y_n,z_n$ is the standard basis
of $K^{3n}$, has strength $n$, and that
\[ \sum_{i=1}^\infty x_iy_iz_i\in S^3_\infty \]
has infinite strength.
\hfill $\clubsuit$
\end{ex}

To find higher-degree examples of polynomials with high strength, we
have the following variant of an argument used in \cite{Derksen17}.

\begin{lm} \label{lm:Leibniz}
Suppose that $q \in S^d V$ has $S(q) \leq k$. Then for some $\ell \leq
k$ there exists an $\ell$-dimensional subspace $W \subseteq V$ such that the
image $\tilde{q}$ of $q$ in $S^d (V/W)$ has $S(\langle x,\tilde{q}
\rangle) \leq 2(k-\ell)$ for all $x \in (V/W)^*$.
\end{lm}

\begin{proof}
Arrange a decomposition $q=r_1 s_1 + \cdots + r_k s_k$ such that
$r_1,\ldots,r_\ell$ are linear and $r_j,s_j$ have degree at least $2$
for each $j > \ell$. Take $W=\langle r_1,\ldots,r_\ell \rangle_K$. Then we have
\[ \tilde{q}=\tilde{r}_{\ell+1} \tilde{s}_{\ell+1} + \cdots +
\tilde{r}_k \tilde{s}_k. \]
Now pair with any $x \in (V/W)^*$ and use the Leibniz rule
on the right to obtain $S(\langle x,\tilde{q}\rangle) \leq 2(k-\ell)$.
\end{proof}

\begin{ex}
Let $d \in \ZZ_{\geq 2}$. Let $x_1,\ldots,x_n$ be a basis of $\CC^n$ and
write $q_{d,n}:=x_1^d + \cdots + x_n^d$. Then we claim that $S(q_{d,n})
\to \infty$ for $n \to \infty$. First, since $q_{d,n-1}$ is obtained from
$q_{d,n}$ by setting $x_n$ equal to zero, $S(q_{d,n})_n$ is a weakly
increasing sequence, and it suffices to prove that it is unbounded. We
prove this by induction on $d$. For $d=2$ we have $S(q_{2,n})=\lceil
n/2 \rceil$ by elementary linear algebra. Let $d \geq 3$ and assume
the claim holds for $d-1$. Suppose $S(q_{d,n}) \leq k$,
and let $\ell \leq k$ and $W \subseteq \CC^n$ be as in
Lemma~\ref{lm:Leibniz}. Then
\[ \tilde{q}_{d,n}=\tilde{x}_1^d + \cdots + \tilde{x}_n^d \]
where, without loss of generality, we may assume that
$\tilde{x}_1,\ldots,\tilde{x}_{n-\ell}$ are a basis of $\CC^n/W$. It
follows that every directional derivative of
\[ \tilde{x}_1^d + \cdots + \tilde{x}_{n-\ell}^d
= \tilde{q}_{d,n} - (\tilde{x}_{n-\ell+1}^d + \cdots + \tilde{x}_n^d)
\]
has strength at most $2(k-\ell)+\ell=2k-\ell \leq 2k$. The left-hand
side is just $q_{d,n-\ell}$ in disguise. But $(\sum_i \pafg{x_i})
q_{d,n-\ell}=d q_{d-1,n-\ell}$, so $S(q_{d-1,n-k}) \leq S(
q_{d-1,n-\ell}) \leq 2k$. Hence if the sequence $(S(q_{d,n}))_n$ were
bounded from above, so was the sequence $(S(q_{d-1,n}))_n$, and this
contradiction proves the claim. An immediate consequence is that the
infinite series $x_1^d+x_2^d+\cdots$ has infinite strength. \hfill
$\clubsuit$
\end{ex}

Equip $S^d_\infty$ with the inverse limit topology.  The group
$\GL_\infty:=\bigcup_n \GL_n(K)$ acts on $S^d_\infty$ by homeomorphisms. The
following is an immediate consequence of Theorem~\ref{thm:Poly}.

\begin{cor} \label{cor:Limit}
Fix $d \in \ZZ_{\geq2}$ and assume that $K$ is an infinite and perfect field with
$\cha K=0$ or $\cha K>d$.  Then the $\GL_\infty$-orbit of
any element $q \in S^d_\infty$ of infinite strength is dense in~$S^d_\infty$.
\hfill $\square$
\end{cor}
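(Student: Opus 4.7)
The plan is to argue by contradiction: assuming $\overline{\GL_\infty \cdot q}$ is a proper subset of $S^d_\infty$, I will derive that $q$ has finite strength, contradicting the hypothesis. The key is to exhibit the orbit closure as a proper closed subset of the functor $S^d$ in the sense of Subsection~\ref{ssec:ClosedPoly} and then appeal to Theorem~\ref{thm:Poly}.

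For each $n$ set $X_n := \overline{\pi_n(\GL_\infty \cdot q)} \subseteq S^d K^n$, where $\pi_n\colon S^d_\infty \to S^d K^n$ is the canonical projection. By construction this family is closed, $\GL_n$-stable (because $\pi_n$ is $\GL_n$-equivariant), and compatible with the forgetful map $S^d K^n \to S^d K^{n-1}$ (because its composition with $\pi_n$ is $\pi_{n-1}$). To assemble $(X_n)_n$ into a closed subset $X$ of $S^d$ the remaining point is functoriality under the standard inclusion $K^n \hookrightarrow K^{n+1}$. For this I take $y = \lim_\alpha \pi_n(g_\alpha q) \in X_n$; each $\pi_{n+1}(g_\alpha q)$ lies in $X_{n+1}$ and projects to $\pi_n(g_\alpha q)$, and acting on this lift by $\mathrm{diag}(1,\ldots,1,\lambda) \in \GL_{n+1}$ and letting $\lambda \to \infty$ (i.e.\ passing to the $\mu = \lambda^{-1} \to 0$ limit of the Zariski map $\mu \mapsto \tilde y(x_1,\ldots,x_n,\mu x_{n+1})$) produces $\pi_n(g_\alpha q)$ viewed as a polynomial in $n+1$ variables. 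Closedness and $\GL_{n+1}$-stability of $X_{n+1}$ then show that $y$, viewed in $n+1$ variables, lies in $X_{n+1}$, as required.

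By the definition of the inverse-limit topology one has $\overline{\GL_\infty \cdot q} = \varprojlim X_n$, and each $\pi_n$ is surjective (using a section of $K^n \twoheadrightarrow K^{n-1}$), so the assumption $\overline{\GL_\infty \cdot q} \subsetneq S^d_\infty$ forces some (hence all sufficiently large) $X_n \subsetneq S^d K^n$, that is, $X \subsetneq S^d$. Theorem~\ref{thm:Poly} then yields $N$ such that every element of $X(V)$ has strength at most $N$; specialising to $V = K^n$ gives $S(\pi_n(q)) \leq N$ for all $n$, which under the natural extension of strength to $S^d_\infty$ forces $S(q) \leq N$, contradicting the hypothesis. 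The main obstacle I foresee is the slightly delicate limit argument in the second paragraph, which promotes $\GL$-equivariance plus projection-compatibility to full functoriality of $X$; the rest is direct bookkeeping around Theorem~\ref{thm:Poly}.
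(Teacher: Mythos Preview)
Your argument is correct and is precisely the elaboration the paper omits: the corollary is stated there as an immediate consequence of Theorem~\ref{thm:Poly} with no proof given, and your contrapositive---assembling the projected orbit closures $X_n=\overline{\pi_n(\GL_\infty\cdot q)}$ into a proper closed subset of $S^d$ and then invoking the theorem---is the intended deduction. One minor stylistic point: the net/limit language in the inclusion-functoriality step sits awkwardly with the Zariski topology; it is cleaner first to show $S^d\iota(\pi_n(gq))\in X_{n+1}$ for each fixed $g\in\GL_\infty$ via the one-parameter degeneration $\mu\to 0$ (which lands in $X_{n+1}$ by $\GL_{n+1}$-stability and closedness), and only then to pass to all of $X_n$ by continuity of the linear map $S^d\iota$.
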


\subsection{Strength of tensors and alternating tensors}

Let $V \in \Vec$. For alternating tensors $q \in \bigwedge^d V$, the strength $S(q)$
is defined as the minimal number $k$ of terms in any decomposition of the form
\[
q = r_1\wedge s_1 + \dots + r_k\wedge s_k
\]
where $r_i\in\bigwedge^{e_i}V$ and $s_i\in\bigwedge^{d-e_i} V$ for
suitable natural numbers $e_i \in \{1,\ldots,d-1\}$.  By taking all
$e_i$ equal to $1$, and using standard properties of the wedge product,
we obtain the bound $S(q)\leq\dim V - d+1$.

Next let $V=(V_1,\dots,V_d) \in \Vec^d$ and define $T^d V:=V_1
\otimes \cdots \otimes V_d$.  For tensors $q\in T^d V$,
the strength $S(q)$ is defined as the minimal number $k$ of terms in any
decomposition of the form
\[
q=r_1\otimes s_1 + \dots + r_k\otimes s_k
\]
where $r_i\in\bigotimes_{j\in J_i}V_j$ and
$s_i\in\bigotimes_{j\in[d]\setminus J_i} V_j$ for suitable non-empty
subsets $J_i\subsetneq[d]$. By taking all $J_i$ equal to $\{\ell\}$,
we obtain the bound $S(q)\leq\dim V_{\ell}$ for any $\ell\in[d]$.

\subsection{Closed subsets of $\Wedge^d$ and $T^d$}

For all $V\in\Vec$, we equip $\bigwedge^dV$ with the Zariski topology. A
closed subset of $\bigwedge^d$ is a rule $X$ that assigns to every
$V\in\Vec$ a closed subset $X(V)$ of $\bigwedge^d V$ in such a
manner that for any $\phi \in \Hom_\Vec(V,W)$ the alternating power
$\bigwedge^d\phi$ maps $X(V)$ into $X(W)$. In particular, the subset
$X(V)$ is stable under the group $\GL(V)$ and if the map $\phi$ is
surjective, then $X(V)$ maps onto $X(W)$.

Similarly, for $V=(V_1,\dots,V_d)\in\Vec^d$, we equip $T^dV$
with the Zariski topology. A closed subset of $T^d$ is
a rule $X$ that assigns to every $V\in\Vec^d$ a closed subset
$X(V)$ of $T^dV$ in such a manner that for any $d$-tuple of
linear maps $\phi_i \in \Hom_{\Vec}(V_i,W_i)$ the tensor product
$T^d\phi:=\phi_1\otimes\dots\otimes\phi_d$ maps $X(V)$ into
$X(W)$. In particular, the subset $X(V)$ is stable under the group
$\GL(V):=\GL(V_1)\times\dots\times\GL(V_d)$ and if the maps $\phi_i$ are all
surjective, then $X(V)$ maps onto $X(W)$.

\subsection{Main theorem for tensors and alternating tensors}

\begin{thm}\label{thm:AltTensor}
Fix $d\in\ZZ_{\geq2}$ and let $K$ be a perfect and infinite field with $\cha K=0$ or $\cha K>d$.
Then for any closed $X\subsetneq \bigwedge^d$ there exists an $N$ such that for
all finite-dimensional vector spaces $V$ the strength of all elements
in $X(V)$ is at most $N$.
\end{thm}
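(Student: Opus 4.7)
The plan is to mimic the proof of Theorem~\ref{thm:Poly}, proceeding by induction on $d$. The first ingredient is the wedge analogue of Lemma~\ref{lm:Leibniz}: if $q\in\bigwedge^d V$ has $S(q)\leq k$, then there exist $\ell\leq k$ and an $\ell$-dimensional subspace $W\subseteq V$ such that the image $\tilde q\in\bigwedge^d(V/W)$ satisfies $S(\iota_x\tilde q)\leq 2(k-\ell)$ for every $x\in(V/W)^*$. The proof copies Lemma~\ref{lm:Leibniz}: arrange $r_1,\ldots,r_\ell$ to be linear (the $e_i=1$ summands) and $r_j,s_j$ to have degree at least $2$ for $j>\ell$, take $W=\langle r_1,\ldots,r_\ell\rangle_K$, and invoke the graded Leibniz rule $\iota_x(r\wedge s)=(\iota_x r)\wedge s\pm r\wedge(\iota_x s)$ for the interior product. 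This is purely formal and works in any characteristic.

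For the base case $d=2$, use that $\GL(V)$-orbits on $\bigwedge^2 V$ are classified by rank and $S(q)=\tfrac12\mathrm{rank}(q)$, so every proper $\GL(V)$-stable closed subset has bounded rank. Since $X\subsetneq\bigwedge^2$ there is an $n_0$ with $X(K^{n_0})\subsetneq\bigwedge^2 K^{n_0}$, giving a rank bound $r_0$ on $X(K^{n_0})$. For any $V$ and $q\in X(V)$, functoriality of $X$ under a generic projection $\pi\colon V\twoheadrightarrow K^{n_0}$ forces $\bigwedge^2\pi(q)\in X(K^{n_0})$ to have rank at most $r_0$; but the generic such projection preserves rank up to $\min(\mathrm{rank}(q),n_0)$ (rounded down to an even integer), so in fact $\mathrm{rank}(q)\leq r_0$ and $S(q)\leq r_0/2$ uniformly.

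For the inductive step, assuming the result for degrees less than $d$, construct from $X\subsetneq\bigwedge^d$ the functorial closed subset $X'\subseteq\bigwedge^{d-1}$ defined by $X'(V):=\overline{\{\iota_x q:q\in X(V),\,x\in V^*\}}$; this is functorial by naturality of the interior product, namely $\bigwedge^{d-1}\phi(\iota_{\phi^*y}q)=\iota_y\bigwedge^d\phi(q)$. The characteristic hypothesis ($\cha K=0$ or $\cha K>d$) makes $\iota_x\colon\bigwedge^d V\to\bigwedge^{d-1} V$ surjective for generic $x\in V^*$, so $X'\subsetneq\bigwedge^{d-1}$ whenever $X\subsetneq\bigwedge^d$, exactly as in Example~\ref{ex:KazhdanZiegler}. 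The induction hypothesis then supplies a uniform bound $M$ with $S(\iota_x q)\leq M$ for all $q\in X(V)$ and $x\in V^*$, i.e.\ $X$ is contained in the alternating version of the Kazhdan--Ziegler set $Z_{d,M}$.

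The main obstacle is the final bootstrap: passing from bounded strength of every contraction $\iota_x q$ to a uniform bound $S(q)\leq N=N(d,M)$. This is the wedge counterpart of the Kazhdan--Ziegler result invoked in Example~\ref{ex:KazhdanZiegler}, and I expect to imitate their strategy in the alternating setting, combining the wedge Leibniz lemma with a constructibility/dimension-counting argument to extract from bounded-strength derivatives an explicit decomposition of $q$ of bounded length, again uniformly in $V$. Together with the base case, this closes the induction and yields the uniform bound claimed by the theorem.
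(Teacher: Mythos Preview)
Your outline contains two genuine gaps, and the overall strategy differs fundamentally from the paper's.

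The central problem is the final ``bootstrap.'' Your induction on $d$ reduces the theorem for an arbitrary $X\subsetneq\bigwedge^d$ to the special case where $X$ is the alternating analogue of $Z_{d,M}$---still in degree $d$, hence \emph{not} covered by the induction hypothesis. You propose to prove this special case by imitating \cite{Kazhdan18}, but their argument relies on bias and Gowers uniformity norms over finite fields in a way tightly coupled to polynomials; no alternating-tensor analogue is evident, and you supply none. Without an independent proof of this step the argument is circular: bounded strength for the alternating $Z_{d,M}$ is precisely an instance of the theorem you are trying to prove.

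A second gap appears earlier. You assert $X'\subsetneq\bigwedge^{d-1}$ ``exactly as in Example~\ref{ex:KazhdanZiegler},'' invoking surjectivity of $\iota_x$. But that example runs the \emph{opposite} implication (properness of $Y_{d-1,k}$ forces properness of $Z_{d,k}$); what you need here is that a proper $X$ has contractions confined to a proper subset of $\bigwedge^{d-1}$, and surjectivity of $\iota_x$ does not give this. A dimension count even points the other way: for large $\dim V$ one has $\dim\bigwedge^d V\gg\dim\bigwedge^{d-1}V$, so a hypersurface in $\bigwedge^d V$ could in principle have contractions filling $\bigwedge^{d-1}V$. (Functoriality of your $X'$ is also not clear: the identity $\bigwedge^{d-1}\phi\circ\iota_{\phi^*y}=\iota_y\circ\bigwedge^d\phi$ only handles linear forms $x\in\operatorname{im}\phi^*$.)

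The paper avoids both issues by inducting on $\delta_X$, the minimal degree of an equation vanishing on $X(U)$, rather than on $d$. A directional derivative of a minimal equation in a pure-wedge direction $u_1\wedge\cdots\wedge u_d$ cuts out a smaller $Y\subseteq X$ handled by induction; on the complement one shifts by $U$ and builds, from the equation itself, an explicit $\GL(V)$-equivariant covariant $Q'(V)\to\bigwedge^d V$ that expresses the top component $q_d$ of any $q\in Z'(V)$ as a polynomial in the lower components. The structure of such covariants then bounds $S(q_d)$ by a function of $\dim U$ alone---no Kazhdan--Ziegler-type input is required.
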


\begin{thm}\label{thm:Tensor}
Fix $d\in\ZZ_{\geq2}$ and let $K$ be a perfect and infinite field. Then
for any closed $X\subsetneq T^d$ there exists an $N$ such that for all
finite-dimensional vector spaces $V_1,\dots,V_d$ the strength of all
elements in $X(V_1,\dots,V_d)$ is at most $N$.
\end{thm}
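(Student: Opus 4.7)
I plan to induct on $d$, paralleling the proof of Theorem~\ref{thm:Poly}. For the base case $d=2$, the strength of $q\in V_1\otimes V_2$ coincides with the matrix rank of $q$ (both non-degenerate splits $J_i=\{1\}$ and $J_i=\{2\}$ produce rank-$1$ tensors), so it suffices to bound the rank uniformly on $X$. If some $X(V_1,V_2)$ contains a rank-$R$ tensor, a generic surjection $(V_1,V_2)\twoheadrightarrow(K^R,K^R)$ yields a full-rank tensor in $X(K^R,K^R)$; closedness and $\GL_R\times\GL_R$-invariance, combined with density of the full-rank orbit, force $X(K^R,K^R)=K^R\otimes K^R$. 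Surjective functoriality then propagates this to $X(V)=T^2V$ for every $V$ with $\dim V_i\leq R$, contradicting $X\subsetneq T^2$ as soon as $R$ is large enough.

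For the inductive step ($d\geq 3$), I would first prove a tensor analogue of Lemma~\ref{lm:Leibniz}: if $q\in T^dV$ has $S(q)\leq k$, then there is a subspace $W\subseteq V_1$ with $\dim W\leq k$ such that the quotient $\tilde q\in T^d(V_1/W,V_2,\dots,V_d)$ satisfies $S(\langle x,\tilde q\rangle)\leq k-\dim W$ for all $x\in(V_1/W)^*$. I would arrange a strength-$k$ decomposition so that the ``first-factor-degenerate'' terms---those with $J_i=\{1\}$ or $J_i=\{2,\dots,d\}$---appear first, and take $W$ to be the span of their $V_1$-components; these degenerate terms die in the quotient, while each remaining term has $|J_i|\geq 2$ or $|[d]\setminus J_i|\geq 2$ on the side containing the $V_1$-factor, so contracting with $x$ leaves a valid split $(d-1)$-tensor. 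Unlike the polynomial case there is no factor of $2$, and no characteristic hypothesis is needed, because tensor contraction is linear rather than a derivation. Applying the induction hypothesis to the proper closed subsets $Y_{d-1,k}\subsetneq T^{d-1}$ (properness via a dimension count as in Example~\ref{ex:Strength}) then yields a uniform strength bound $N=N(d-1,k)$ on their elements, and I introduce the closed proper subset $Z_{d,k}(V):=\{q\in T^dV:\langle x,q\rangle\in Y_{d-1,k}(V_2,\dots,V_d)\text{ for all }x\in V_1^*\}$ analogous to Example~\ref{ex:KazhdanZiegler}.

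The theorem reduces to showing (a)~every closed $X\subsetneq T^d$ lies inside some $Z_{d,k}$, and (b)~each $Z_{d,k}$ has uniformly bounded strength. The main obstacle is (b), a Kazhdan--Ziegler-type statement for tensors: writing $q\in Z_{d,k}$ in a basis of $V_1$ as $q=\sum_ie_i\otimes q_i$, the span of the $q_i$'s is a linear subspace of the bounded-strength locus $Y_{d-1,k}$ (equivalently, of the strength-$\leq N$ locus, by the induction hypothesis). I would invoke a Flanders/Atkinson-type structural theorem forcing the $q_i$'s to share a common tensor factor up to bounded codimension, which then yields a bounded-strength decomposition of $q$ along a suitable fixed $J$. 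This structural ingredient, for linear subspaces of bounded-strength $(d-1)$-tensors, is the most delicate part of the argument. Step (a) is then obtained by observing that the closure of all first-contractions of $X$-tensors is itself a closed functorial subset of $T^{d-1}$ which cannot equal $T^{d-1}$ without forcing $X=T^d$ by a lifting argument that uses functoriality of $X$ in all $d$ variables; so by induction it lies inside $Y_{d-1,k}$ for some $k$, which together with (b) completes the induction.
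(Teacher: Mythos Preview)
Your reduction has two genuine gaps. First, step~(a) is false as stated: take $X(V)=\{v_1\otimes r : v_1\in V_1,\ r\in V_2\otimes\cdots\otimes V_d\}$, the locus where the first flattening has rank at most~$1$. This is closed, functorial, and proper in $T^d$, yet for any $x\in V_1^*$ the contraction $\langle x,v_1\otimes r\rangle=x(v_1)r$ ranges over all of $T^{d-1}(V_2,\ldots,V_d)$; equivalently, $X(K,V_2,\ldots,V_d)=T^{d-1}(V_2,\ldots,V_d)$. So the ``lifting argument'' cannot work---$X$ is not a linear subspace, and knowing that every $(d-1)$-tensor arises as a first contraction does not let you add lifts to recover all of $T^d$ inside $X$. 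In this example $X\not\subseteq Z_{d,k}$ for any $k$, so the reduction to the $Z_{d,k}$'s fails outright.

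Second, and more fundamentally, step~(b) is the entire content of the theorem in the special case $X=Z_{d,k}$, and you do not prove it. The ``Flanders/Atkinson-type structural theorem'' you invoke---that a linear space of $(d-1)$-tensors of bounded strength must share a common factor up to bounded codimension---is precisely the Kazhdan--Ziegler result (for polynomials) that the paper cites as prior work and is generalising. No such theorem is available off the shelf for tensors, and proving it is not easier than the theorem itself. The paper avoids this circularity entirely by inducting not on $d$ but on $\delta_X$, the minimal degree of a defining equation of $X(U)$ for a fixed witness $U$ with $X(U)\subsetneq T^dU$. A directional derivative of such an equation cuts out a smaller $Y\subseteq X$ handled by induction, and on the complement $Z'=X'\setminus Y'$ one constructs an explicit $\GL(V)$-equivariant polynomial map $\Psi:Q'(V)\to T^dV$ whose image contains the top piece $q_{[d]}$ of every $q\in Z'(V)$; the covariant structure then bounds $S(q_{[d]})$ by a function of $\dim U$ alone.
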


Both of these theorems have reformulations in terms of suitable projective
limits as in Corollary~\ref{cor:Limit}; we leave these to the reader.

\subsection{A version over $\ZZ$}

Theorems~\ref{thm:Poly},~\ref{thm:AltTensor}, and~\ref{thm:Tensor}
require that $K$ be fixed in advance, and allow for the closed subsets of
$S^d,\Wedge^d,T^d$ to be defined by equations specific to $K$. The price
that we pay for this generality is that we need to require $K$ to be perfect
and infinite, and that the values of $N$ in these theorems depend on $K$.

Indeed, in the proofs, perfectness of the field is used to ensure that
a squarefree nonzero polynomial has some nonzero directional derivative;
and infiniteness of the field is used to ensure that if some polynomial
in $t$ vanishes for all $t \in K$, then the coefficients of all monomials
$t^d$ vanish. We can get around both of these restrictions by working
only with tensor properties defined over $\ZZ$ before specialising to $K$.

Let $\VecZ$ be the category of finite-rank free $\ZZ$-modules with
$\ZZ$-linear maps. Every object $V \in \VecZ$ gives rise to an affine scheme, the
spectrum of the symmetric algebra (over $\ZZ$) on the module dual to $V$.
By abuse of notation, we write $V$ for this scheme, as well. The scheme
of a product $V \times W$ is canonically isomorphic to the product of
the schemes, and a $\phi \in \Hom_{\VecZ}(V,W)$ determines a morphism
of schemes $V \to W$.

A module $V \in \VecZ$ has a symmetric power $S^d_\ZZ V \in \VecZ$
characterised by the usual universal property.  A {\em closed subscheme}
of $S^d_\ZZ$ is a rule $X_\ZZ$ that assigns to each $V \in \VecZ$
a closed subscheme of $S^d_\ZZ V$ in such a manner that for $V,W \in
\VecZ$ and $\phi \in \Hom_{\VecZ}(V,W)$ the morphism $S^d_\ZZ \phi$
maps $X_\ZZ(V)$ into $X_\ZZ(W)$. This is equivalent to the condition
that the morphism of schemes determined by
\[ S^d_\ZZ V \times \Hom_{\VecZ}(V,W) \to S^d_\ZZ W, \quad
(v_1 \cdots v_d,\phi) \mapsto \phi(v_1) \cdots \phi(v_d)
\]
maps $X_\ZZ(V) \times \Hom_{\VecZ}(V,W)$ into $X_\ZZ(W)$.

In terms of equations this means the following: Suppose
that $V=\ZZ^m$ and $W=\ZZ^n$, let $f$ be any polynomial in the
$\binom{n-1+d}{d}$ standard coordinates on $S^d_\ZZ W$ with coefficients
in $\ZZ$, and let $\phi$ be an $n \times m$-matrix whose entries
$\phi_{ij}$ are variables. Then one can expand $f \circ S^d_\ZZ \phi$ as
a polynomial $\sum_{\alpha \in \ZZz^{n \times m}} c_\alpha \phi^\alpha$
in the $\phi_{ij}$ whose coefficients $c_\alpha$ are polynomials in the
$\binom{m-1+d}{d}$ standard coordinates on $S^d_\ZZ V$.  The condition
above says that if $f$ is in the ideal of $X_\ZZ(W)$, then
all the $c_\alpha$ lie in the ideal of $X_\ZZ(V)$.

If $X_\ZZ$ is a closed subscheme of $S^d_\ZZ$, then for each field $K$
we obtain a closed subset $X_K$ of $S^d=S^d_K$ as follows: for $V \in
\Vec=\Vec_K$ choose any linear isomorphism $\phi:V \to K^n$, and let
$X_K(V)$ be the preimage under $S^d \phi$ of the set of $K$-valued points
of the scheme $X(\ZZ^n) \subseteq S^d \ZZ^n$.

\begin{re}\label{re:XoverZ}
For all field extensions $K\subseteq L$, we have
$$
X_L(V\otimes_KL)\cap S^dV=X_K(V)
$$
for all vector spaces $V\in\Vec_K$.
\end{re}

\begin{thm} \label{thm:PolyZ}
Let $d \in \ZZ_{\geq 2}$ and let $X_\ZZ$ be a closed subscheme of
$S^d_\ZZ$. Then there exists an $N \in \ZZz$ such that the following
holds:
\begin{itemize}
\item[(\dag)] Let $K$ be any field with $\cha K=0$ or $\cha K>d$ such that
$X_K \subsetneq S^d_K$. Then for all $V \in \Vec_K$ the
strength of all elements in $X_K(V)$ is at most $N$.
\end{itemize}
\end{thm}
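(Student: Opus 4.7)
The plan is to deduce Theorem~\ref{thm:PolyZ} from Theorem~\ref{thm:Poly} by extracting a uniform bound from the individual bounds that Theorem~\ref{thm:Poly} provides over various base fields, using that $X_\ZZ$ is a fixed, finite-type piece of data defined over $\ZZ$.

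First I would reduce to the case where $K$ is algebraically closed. Remark~\ref{re:XoverZ} gives $X_K(V)=X_{\overline K}(V\otimes_K\overline K)\cap S^d V$ for every $V\in\Vec_K$, so $X_K\subsetneq S^d_K$ forces $X_{\overline K}\subsetneq S^d_{\overline K}$. Suppose a bound $M$ on the $\overline K$-strength of every element of $X_{\overline K}(W)$ has been obtained. For $q\in X_K(V)$ the image $q\otimes 1\in X_{\overline K}(V\otimes_K\overline K)$ then has $\overline K$-strength at most $M$, and the task is to recover a bound on the $K$-strength of $q$. Under the characteristic hypothesis $\cha K=0$ or $>d$, this should follow from a Galois-descent argument: a short $\overline K$-decomposition of $q$ is defined over some finite Galois extension $L/K$, and by grouping Galois-conjugate summands and averaging over $\mathrm{Gal}(L/K)$ one produces a $K$-decomposition whose length is bounded by a function of $M$ and $d$ alone.

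Next I would apply Theorem~\ref{thm:Poly} to the absolute algebraically closed fields $\overline\QQ$ and $\overline{\mathbb{F}_p}$ for each prime $p>d$, obtaining bounds $N_0$ and $N_p$ in the characteristics where $X$ remains proper. Any algebraically closed $L$ of a fixed characteristic has the property that each element of $X_L(W)$ is defined over an algebraic subfield of $L$ embeddable into $\overline\QQ$ or $\overline{\mathbb{F}_p}$, so a bound over the ``absolute'' field governs all fields of the same characteristic.

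The main obstacle is uniformity in $p$. The scheme $X_\ZZ$ is cut out by finitely many polynomials with integer coefficients, and a standard spreading-out argument produces a finite set of ``bad'' primes outside of which $X_{\overline{\mathbb{F}_p}}$ matches the generic fibre $X_{\overline\QQ}$ geometrically; but the bound $N_p$ depends a priori on $p$. To complete the argument one wants to show that the bound produced by Theorem~\ref{thm:Poly} depends only on $d$ and a finite amount of geometric data attached to $X$ (degrees of defining equations, dimensions of relevant varieties, etc.), all stable under good reduction. Granting this, $N_p$ takes a common value $N_\infty$ for all $p>d$ outside the bad locus, and one sets $N:=\max(N_0,\,N_\infty,\,\max_{\text{bad }p>d}N_p)$, with the residual case $X_{\overline\QQ}=S^d_{\overline\QQ}$ (in which only finitely many primes can contribute) handled separately. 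Making this uniformity rigorous via a careful inspection of the proof of Theorem~\ref{thm:Poly}---or, alternatively, via an ultraproduct/compactness argument that directly combines hypothetical counterexamples over an infinite family of fields into a single counterexample over an ultraproduct---is the step I expect to be technically hardest.
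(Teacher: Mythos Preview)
Your overall strategy---work field by field using Theorem~\ref{thm:Poly} and then assemble the bounds---is reasonable, but you are making the problem harder than it is and introduce two genuine gaps along the way.

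First, the Galois-descent step is unjustified. From an $\overline K$-decomposition $q=\sum_{i=1}^M r_i s_i$ defined over a finite Galois extension $L/K$, averaging over $\mathrm{Gal}(L/K)$ does not produce a $K$-decomposition of $q$; at best it produces an $L$-decomposition of $[L:K]\,q$ with $[L:K]\cdot M$ terms, and $[L:K]$ is not bounded in terms of $M$ and $d$. I do not know a direct argument that $K$-strength is bounded by a function of $\overline K$-strength and $d$ alone, and already for $d=2$ anisotropic forms show that the two can differ. Second, the sentence ``each element of $X_L(W)$ is defined over an algebraic subfield of $L$'' is false as written (elements can have transcendental coordinates); what you presumably want is a Lefschetz-principle or ultraproduct transfer between algebraically closed fields of the same characteristic, which is a different statement.

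More importantly, you have overlooked the one ingredient that dissolves the ``uniformity in $p$'' obstacle you single out as hardest: Remark~\ref{re:Explicit}. The bound produced by the proof of Theorem~\ref{thm:Poly} is the explicit number~\eqref{eq:Bound}, which depends only on $d$ and on $\dim U$ for any $U$ with $X(U)\subsetneq S^d U$---it does not depend on $K$. The paper exploits this directly: choose one nonzero $f\in\cI(X_\ZZ(U))$ with integer coefficients; for every $K$ whose characteristic does not divide all coefficients of $f$, the reduction of $f$ is a nonzero equation for $X_K(U_K)$, so the \emph{same} bound~\eqref{eq:Bound} applies. This handles all but finitely many characteristics at once. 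For each remaining prime $p$ with $X_{K}\subsetneq S^d_K$ for some $K$ of characteristic $p$, one picks a (possibly different) $U$ and $f$ and again gets~\eqref{eq:Bound}; the final $N$ is the maximum of finitely many such numbers.

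The paper also avoids passing to $\overline K$ altogether. Since $f$ has integer coefficients, it and the derived data $h$ and $\Psi$ live over the prime field; perfectness is replaced by the observation that if all directional derivatives of $f$ vanish in characteristic $p$ then $f$ is a $p$-th power over the prime field and one may take its $p$-th root; infiniteness is replaced by verifying the required polynomial identity for $\Psi$ over an infinite extension $L\supseteq K$ and invoking Remark~\ref{re:XoverZ}. The strength bound then comes from Lemma~\ref{lm:Covariants} applied over $K$ itself, with no descent needed.
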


\begin{ex}[Examples~\ref{ex:Strength} and~\ref{ex:KazhdanZiegler} revisited]
Let $V \in \VecZ$. For a sequence $e_1,\ldots,e_k$ of integers in
$\{1,\ldots,d-1\}$ there is a morphism of schemes
\[ \prod_{i=1}^k (S^{e_i}_\ZZ V \times S^{d-e_i}_\ZZ V) \to
S^d_\ZZ V,\quad
((r_1,s_1),\ldots,(r_k,s_k)) \mapsto r_1 s_1 + \cdots + r_k
s_k. \]
We define $Y_{d,k,\ZZ}(V)$ as its scheme-theoretic image, i.e., as the
closed subscheme defined by the kernel of the homomorphism of rings
in the opposite direction. A straightforward verification shows that
$Y_{d,k,\ZZ}$ is a closed subscheme of $S^d_\ZZ$. A similar construction
yields a $\ZZ$-version $Z_{d,k,\ZZ}$ of $Z_{d,k}$. The previous theorem
applied to these closed subschemes yields a bound $N$ on the strength of
elements of $Z_{d,k}(W),\ W \in \Vec_K$ that is independent of $W$ and $K$
($K$ may now be finite or non-perfect, but must satisfy the conditions
on the characteristic). This yields the main result of \cite{Kazhdan18}.
\end{ex}

The $\ZZ$-constructions in this subsection have analogues
for the polynomial functors $\Wedge^d$ and $T^d$, and the analogues of
Theorems~\ref{thm:AltTensor} and~\ref{thm:Tensor} also hold over $\ZZ$.

\begin{thm} \label{thm:AltTensorZ}
Let $X$ be a closed subscheme of $\Wedge^d_\ZZ$. Then there exists an
$N \in \ZZz$ such that the following holds:
\begin{itemize}
\item[(\dag)]  Let $K$ be any field with $\cha K=0$ or $\cha K>d$ such that
$X_K \subsetneq \Wedge^d_K$. Then for all $V \in \Vec_K$ and all
$q \in X_K(V)$ we have $S(q) \leq N$.
\end{itemize}
\end{thm}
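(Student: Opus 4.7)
The plan is to upgrade Theorem \ref{thm:AltTensor} to an arithmetic statement by tracking where the proof uses either infiniteness or perfectness of the field. The bound $N$ should ultimately be derived from the defining ideal of $X_\ZZ$, with the characteristic hypothesis acting only as a denominator-clearing condition.

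First, for any field $K$ of admissible characteristic, $X_K \subsetneq \Wedge^d_K$ implies $X_{\bar K} \subsetneq \Wedge^d_{\bar K}$ for the algebraic closure $\bar K$: if $q \in \bigwedge^d K^n$ is not in $X_K(K^n)$, then by Remark \ref{re:XoverZ} it is not in $X_{\bar K}(\bar K^n) \cap \bigwedge^d K^n$ and hence not in $X_{\bar K}(\bar K^n)$. Since $\bar K$ is perfect and infinite, Theorem \ref{thm:AltTensor} gives a bound on strength over $\bar K$.

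The remaining task is to show that the resulting bound depends only on $X_\ZZ$ and then to descend it from $\bar K$ back to $K$. For the first, I would trace the proof of Theorem \ref{thm:AltTensor} and note that all constructions---wedge multiplications, contractions, polynomial identities, and choice of generic hyperplanes or sections---are definable over $\ZZ$, with $\cha K = 0$ or $\cha K > d$ needed only to ensure that certain denominators dividing $d!$ are invertible. Under this assumption, the same $N$ works over every field of admissible characteristic. For the second, the issue is that $q \in X_K(V)$ may satisfy $S_{\bar K}(q) < S_K(q)$, so a bound on strength over $\bar K$ does not automatically yield one over $K$. The way around this is to re-run the proof of Theorem \ref{thm:AltTensor} with $K$ in place of $\bar K$, replacing every Zariski-generic choice by an explicit $K$-rational one constructed from the defining data of $X_\ZZ$; the reductions only require certain determinants or differentials to be nonzero, and that non-vanishing can be verified once and for all inside $X_\ZZ$.

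The main obstacle, in my estimation, is verifying cleanly that no step of the proof of Theorem \ref{thm:AltTensor} genuinely requires $K$ to be infinite or perfect beyond the denominator-clearing use just mentioned. The cases of finite $K$ and imperfect fields of characteristic $p > d$ presumably need a controlled Galois-descent argument that replaces genericity by passage to an explicit finite extension whose degree is bounded in terms of $d$ and the already-obtained bound over $\bar K$; this is analogous to the descent needed for Theorem \ref{thm:PolyZ}, and I would expect only cosmetic changes when moving from symmetric to exterior powers.
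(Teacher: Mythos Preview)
Your proposal correctly isolates the two field hypotheses (infinite, perfect) as the obstacles, but the route you sketch---apply Theorem~\ref{thm:AltTensor} over $\bar K$ and then descend the strength bound to $K$---manufactures a problem that the paper simply avoids, and your proposed resolution of that problem (``re-run the proof with explicit $K$-rational choices'', ``Galois-descent to a finite extension of bounded degree'') remains speculative. The paper never passes to $\bar K$. Instead it starts from a $\ZZ$-module $U$ with $X_\ZZ(U)\neq\Wedge^d_\ZZ U$ and a nonzero integral polynomial $f$ in the ideal. This $f$ specialises to something nonzero over $K$ for all but finitely many characteristics; the remaining primes are handled by a separate choice of $U$, and the final $N$ is the maximum of finitely many bounds of the form in Remark~\ref{re:ExplicitAlt}. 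You gloss over this point when you assert that ``the same $N$ works over every field of admissible characteristic''.

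More importantly, because $f$ lives over the prime field of $K$, which is perfect, if all directional derivatives of $f$ vanish over $K$ then $f$ is a $p$-th power \emph{over the prime field}, and one replaces $f$ by its $p$-th root. This is the concrete mechanism that bypasses perfectness; your proposal does not mention it. For infiniteness, the derivative $h$ and the covariant $\Psi$ of Lemma~\ref{lm:PsiAltTensor} are built from $f$ and the wedge structure, hence are already defined over the prime field and therefore over $K$. The identity $d!\,q_d=-h(q_0)^{-1}\Psi(q_0,\dots,q_{d-1})$ is then an identity between $K$-rational maps; one verifies it after base change to any infinite extension $L\supseteq K$ and invokes Remark~\ref{re:XoverZ}. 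Since $\Psi$ is a $K$-map, the decomposition it yields via Lemma~\ref{lm:AltCovariants} is a $K$-decomposition, so the resulting bound is on $K$-strength. No descent argument is needed: your worry that $S_{\bar K}(q)<S_K(q)$ is legitimate in the abstract, but the paper's construction never encounters it because everything is $K$-rational from the outset.
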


\begin{thm} \label{thm:TensorZ}
Let $X$ be a closed subscheme of $T^d_\ZZ$. Then there exists an $N \in \ZZz$
such that the following holds:
\begin{itemize}
\item[(\dag)]  Let $K$ be any field such that $X_K \subsetneq T^d_K$. Then
for all $V \in \Vec_K^d$ and all $q \in X_K(V)$ we have $S(q) \leq N$.
\end{itemize}
\end{thm}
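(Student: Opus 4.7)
My plan is to derive Theorem~\ref{thm:TensorZ} from Theorem~\ref{thm:Tensor} by a spreading-out argument over $\operatorname{Spec}\ZZ$, handling non-perfect and finite fields by a descent from the algebraic closure.

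\emph{Step 1 (characteristic zero).} Either $X_{\overline{\QQ}} = T^d_{\overline{\QQ}}$, in which case the ideal defining $X_\ZZ(V)$ in the torsion-free ring $\ZZ[T^d_\ZZ V]$ is killed by tensoring with $\QQ$, is therefore itself torsion and hence zero, so $X_\ZZ = T^d_\ZZ$ and the theorem is vacuous for every $K$; or $X_{\overline{\QQ}} \subsetneq T^d_{\overline{\QQ}}$, in which case Theorem~\ref{thm:Tensor} applied to the perfect infinite field $\overline{\QQ}$ yields a bound $N_0$ for the strength of elements of $X_{\overline{\QQ}}(V)$, uniformly in $V$.

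\emph{Step 2 (spreading out).} The proof of Theorem~\ref{thm:Tensor} used in Step~1 is constructive: it produces finitely many algebraic witnesses---for instance tensors outside $X$ and the polynomial identities certifying the strength decompositions of elements inside $X$---all defined over some finitely generated $\ZZ$-subring of $\overline{\QQ}$. After clearing denominators this data lives over $\ZZ[1/M]$ for some integer $M \geq 1$. For every prime $p \nmid M$, reducing this data modulo $p$ remains valid and witnesses the same bound $N_0$ over $\overline{\mathbb F_p}$. For each of the finitely many primes $p \mid M$ one invokes Theorem~\ref{thm:Tensor} over $\overline{\mathbb F_p}$ separately to obtain a bound $N_p$ (or notes the statement is vacuous there because $X_{\overline{\mathbb F_p}} = T^d_{\overline{\mathbb F_p}}$). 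Set $N := \max(N_0, \max_{p \mid M} N_p)$.

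\emph{Step 3 (descent to arbitrary $K$; the main obstacle).} Let $K$ be any field with $X_K \subsetneq T^d_K$ and let $q \in X_K(V)$. The ideal of $X_\ZZ$ stays non-zero after base change to $\bar K$, so $X_{\bar K} \subsetneq T^d_{\bar K}$; viewing $q$ inside $X_{\bar K}(V \otimes_K \bar K)$ via Remark~\ref{re:XoverZ}, Steps~1--2 furnish $S_{\bar K}(q) \leq N$. The principal obstacle is to deduce the same bound over $K$ itself: since strength can only decrease under field extensions, the inequality $S_{\bar K}(q) \leq N$ does not automatically force $S_K(q) \leq N$. For ordinary tensors this descent is expected to go through, either by arguing that the witnesses chosen in Step~1 can be made Galois-equivariantly so that the certifying decompositions of $q$ themselves descend to $K$, or, more ambitiously, by re-running the proof of Theorem~\ref{thm:Tensor} directly over $K$ and verifying that its uses of perfectness and infiniteness become dispensable when $X_K$ is pulled back from an integrally defined $X_\ZZ$ (for tensors, as opposed to symmetric or alternating tensors, the perfectness assumption is indeed expected to be inessential). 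Making this descent rigorous---and thereby removing the perfect/infinite hypothesis that is present in Theorem~\ref{thm:Tensor} but absent in Theorem~\ref{thm:TensorZ}---is the technical heart of the proof.
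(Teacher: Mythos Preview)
Your proposal leaves Step~3 explicitly unfinished, and that is precisely where the content lies. Bounding $S_{\bar K}(q)$ and then trying to descend the decomposition to $K$ (your option (a)) is not what the paper does and would be hard to carry out: the decomposition witnessing $S_{\bar K}(q)\le N$ is obtained as the image of a covariant $\Psi$ evaluated at specific points, and there is no reason those values should be Galois-stable. Your Step~2 is also shakier than it looks: the proof of Theorem~\ref{thm:Tensor} does not literally output ``polynomial identities certifying strength decompositions'' that one can reduce mod~$p$; it outputs, at each inductive step, a covariant whose image happens to have bounded strength, and spreading that out is not the same as spreading out an identity.

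The paper instead follows your option~(b), but does it concretely rather than by appeal to a black box. One fixes $U\in\VecZ$ and a nonzero $f$ in the ideal of $X_\ZZ(U)$ with integer coefficients; for every field $K$ whose characteristic does not divide all coefficients of $f$, this $f$ specialises to a nonzero equation for $X_K(U_K)$. Now one re-runs the induction of Section~\ref{sec:Tensor} with this particular $f$. Perfectness was only used to guarantee a nonzero directional derivative; here $f$ lives over the prime field of $K$, which is perfect, so if all directional derivatives vanish then $f$ is a $p$-th power and one replaces it by its $p$-th root. Infiniteness was only used to equate ``vanishes for all $t\in K$'' with ``all Taylor coefficients vanish''; this is handled by passing to an infinite extension $L\supseteq K$ and invoking Remark~\ref{re:XoverZ} to come back. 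The crucial point that makes this uniform is that the resulting bound is the explicit expression in Remark~\ref{re:ExplicitTensor}, which depends only on $\dim U_1,\dots,\dim U_d$ and not on $K$. Finally, the finitely many characteristics dividing all coefficients of $f$ are treated by choosing, for each, a new integral $f$ (possibly with a larger $U$), and $N$ is the maximum of the resulting finitely many bounds.
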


\subsection{Relation to previous work}

For $d=3$, our Corollary~\ref{cor:Limit} is proved in \cite[Theorem
1.7]{Derksen17}. For $Z_{d,k,\ZZ}$, our Theorem~\ref{thm:Poly} (and
indeed, reading carefully, Theorem~\ref{thm:PolyZ}) is proved in
\cite{Kazhdan18} using Gowers norms---interestingly, this requires a
detour via finite fields to prove the theorem for $K=\CC$. Our
proof is quite different, and follows the second author's technique
from \cite{Draisma17}.

Unlike for Waring rank and tensor rank, there seems to be very little
literature on equations for the varieties $Y_{d,k}(V)$ of polynomials
of bounded strength. For $d=3$ and $k=1$ the ideal is generated by 35
polynomials of degree $8$ \cite{Chipalkatti02}.

\subsection{Organisation}
In Section~\ref{sec:Poly} we prove Theorem~\ref{thm:Poly}.
In Sections~\ref{sec:AltTensor} and~\ref{sec:Tensor} we adapt
this proof to the case of alternating and ordinary tensors,
respectively. In Remarks~\ref{re:Explicit}, \ref{re:ExplicitAlt},
and~\ref{re:ExplicitTensor} we give explicit (though probably
non-optimal!) values for $N$ from Theorems~\ref{thm:Poly},
\ref{thm:AltTensor}, and~\ref{thm:Tensor}. These are used in
Section~\ref{sec:PolyZ}, where we prove Theorem~\ref{thm:PolyZ}.

\subsection*{Acknowledgments}
The authors thank Professor Kazhdan for the heads-up on \cite{Kazhdan18}
and stimulating e-mail discussions. The second author thanks the Institut
Mittag-Leffler for their hospitality during this project.

\section{Proof for polynomials} \label{sec:Poly}
In this section we prove Theorem~\ref{thm:Poly}. By assumption, there
exists a $U \in \Vec$ such that $X(U) \subsetneq S^d U$. We fix this $U$
throughout the proof. The bound $N$ that we will obtain depends only on
$d$ and $\dim U$, see Remark~\ref{re:Explicit}.

\subsection{Irreducibility}
The following lemma is a standard fact from representation theory. Recall that
$\cha K=0$ or $\cha K>d$. This implies that for any $V \in \Vec$ the
$\GL(V)$-module $S^d(V^*)$ is isomorphic to $(S^d V)^*$.

\begin{lm} \label{lm:Irred}
For each $V \in \Vec$, the $\GL(V)$-module $S^d V$ is irreducible and
linearly spanned by its closed subvariety $P:=\{v^d \mid v \in V\}
\subseteq S^d V$. Furthermore, any $\GL(V)$-equivariant polynomial
map from $V$ into a $\GL(V)$-module $N$ on which $t 1_V$ acts via
multiplication with~$t^d$ factors as $V \to S^d V,\  v \mapsto v^d$
and a unique $\GL(V)$-equivariant linear map $S^dV \to N$.\hfill\qed
\end{lm}

\subsection{Homogeneity}
We equip the coordinate ring $K[S^d V]$ with the $d \cdot \ZZz$-grading
in which the elements of $(S^d V)^*$ have degree $d$. For any closed $X
\subseteq S^d$ we find, from the fact that $X(V)$ is $\GL(V)$-stable,
that the ideal $\cI(X(V)) \subseteq K[S^d V]$ is $\GL(V)$-stable and in
particular homogeneous. We define
\[ \delta_X:=\min_{f \in \cI(X(U)) \setminus \{0\}} \deg f \in d \ZZz. \]

\subsection{Induction}
If $\delta_X=0$, then $\cI(X(U))$ contains a nonzero constant polynomial,
so that $X(U)=\emptyset$. For any $V \in \Vec$, the $d$-th symmetric
power of the zero map $0_{V \to U}:V \to U$ maps $X(V)$ into $X(U)$,
hence all $X(V)$ are empty. Hence in Theorem~\ref{thm:Poly} we may take
$N=0$. We proceed by induction, assuming that $\delta_X>0$ and that the
theorem holds for all $Y \subseteq S^d$ with $Y(U) \subsetneq S^d U$
and $\delta_Y < \delta_X$.

\subsection{Derivative}

Let $f \in \cI(X(U)) \setminus \{0\}$ be homogeneous of degree $\delta_X$.
By minimality of $\delta_X$ and perfectness of $K$, there exists an
$r \in S^d U$ such that the directional derivative $h:=\frac{\partial
f}{\partial r}$ is not the zero polynomial. By Lemma~\ref{lm:Irred}, $S^d
U$ is spanned by $d$-th powers, so we may further assume that $r=u^d$
for some $u \in U$.

We define the closed subset $Y \subsetneq S^d$ by
\[ Y(V):=\left\{q \in X(V) ~\middle|~ \forall \phi \in \Hom_\Vec(V,U):
 h((S^d \phi) q) =0 \right\}. \]
Now $\delta_Y \leq \deg h=\deg f-d<\deg f$, so by the induction hypothesis
the theorem holds for $Y$. We define
\[ Z(V):=X(V) \setminus Y(V), \]
and set out to prove that all elements in $Z(V)$ have bounded strength
independent of $V$.

\subsection{Shifting}
For $V \in \Vec$ we define
\begin{align*}
P'(V)&:=S^d (U \oplus V)=\bigoplus_{i=0}^d S^{d-i} U \otimes S^i V, \\
X'(V)&:=X(U \oplus V) \subseteq P'(V), \\
Z'(V)&:=\left\{q \in X'(V) ~\middle|~ h((S^d(1_U \oplus 0_{V \to U})) q) \neq 0\right\};
\end{align*}
the notation is chosen compatible with \cite{Draisma17}. We think of
$P'(V),X'(V)$ as varieties over $S^d U,X(U)$, respectively, via the
linear map $S^d(1_U \oplus 0_{V \to U})$. Accordingly, by slight abuse
of notation, we will write $h$ for $h \circ S^d(1_U \oplus 0_{V \to U})$.

\begin{lm}
We have
\[ Z(U \oplus V) = \bigcup_{g \in \GL(U \oplus V)} g Z'(V). \]
In particular, $\sup_{q \in Z(U \oplus V)} S(q) = \sup_{q
\in Z'(V)} S(q)$.
\end{lm}
\begin{proof}
First, we have $Z(U \oplus V) \supseteq Z'(V)$, and since the left-hand
side is $\GL(U \oplus V)$-stable, the inclusion $\supseteq$ follows.
Conversely, if $q \in Z(U \oplus V)$, then there exists a linear map
$\phi: U \oplus V \to U$ for which $h((S^d \phi) q) \neq 0$.  Since this
is an open condition on $\phi$, we may further assume that $\phi$ has
full rank. Then for a suitable $g \in \GL(U \oplus V)$ we find that
$\phi =(1_U \oplus 0_{V \to U}) \circ g$.  Accordingly,
\[ h(gq) = h((S^d (1_U \oplus 0_{V \to U}))(S^dg) q) = h((S^d\phi) q) \neq 0 \]
and hence $gq \in Z'(V)$.
\end{proof}

\begin{lm} \label{lm:XYZ}
We have
\[ \sup_{V \in \Vec} \sup_{f \in X(V)} S(f) = \sup_{V \in
\Vec} \sup\left\{\sup_{f \in Y(V)} S(f), \sup_{f \in Z'(V)}
S(f)\right\}. \]
\end{lm}

\begin{proof}
The same statement with $Z'(V)$ replaced by $Z(V)$ is obvious given the fact that
$X(V)=Y(V)\cup Z(V)$. The map $S^d(0_{V\to U},1_V)$ maps $Z(V)$ into
$Z(U\oplus V)$ and is easily seen to preserve the strength. By the previous lemma
\[\sup_{f \in Z'(V)}S(f)=\sup_{f\in Z(U\oplus V)}S(f)\]
and so the statement follows.
\end{proof}

So it suffices to show that elements in $Z'(V)$ have bounded strength.

\subsection{Chopping}

\begin{lm} \label{lm:Chopping}
For $q \in P'(V)$ write $q=q_0 + \cdots + q_d$ with $q_i \in S^{d-i}
U \otimes S^i V$. Then
\[ S(q) \leq \dim U + S(q_d). \]
\end{lm}
\begin{proof}
Note that $q_0+\ldots+q_{d-1}$ is in the image of the map
\begin{eqnarray*}
U\otimes S^{d-1}(U\oplus V)&\to& S^d(U\oplus V)\\
r\otimes s&\mapsto&rs
\end{eqnarray*}
and hence has strength at most $\dim U$. Now, strength is subadditive, so
\[ S(q) \leq S(q_0 + \ldots + q_{d-1}) + S(q_d)\leq \dim U+S(q_d). \qedhere\]
\end{proof}

So, as $U$ is fixed, it suffices to prove that for $V$ ranging
through $\Vec$ and $q$ ranging through $Z'(V)$ the component $q_d$
has bounded strength.

\subsection{Embedding}

Define
\[ Q'(V):=P'(V)/S^d V = \bigoplus_{i=0}^{d-1} S^{d-i} U \otimes S^i V \]
and write $\pi(V):P'(V) \to Q'(V)$ for the natural projection. Let
\[ B(V):=\{q \in Q'(V) \mid h(q) \neq 0\}=\{(q_0,\dots,q_{d-1})\in Q'(V) \mid h(q_0) \neq 0\}, \]
an open set in $Q'(V)$.  Then $\pi(V)$ maps $Z'(V)$ into $B(V)$, and by
\cite[Lemma 7]{Draisma17} and Lemma~\ref{lm:Irred}, we have the following.

\begin{lm} \label{lm:Embedding}
The map $\pi(V)$ restricts to a closed embedding $Z'(V) \to
B(V)$. \hfill $\square$
\end{lm}

We will not actually use this lemma, but we will use its proof method.

\subsection{An equivariant map back} \label{ssec:Psi}

We construct a suitable map opposite to the embedding of
Lemma~\ref{lm:Embedding}.

\begin{lm} \label{lm:Psi}
There exists a $\GL(V)$-equivariant polynomial map $\Psi:Q'(V) \to
S^dV$ such that, for all $q=(q_0,\ldots,q_d) \in Z'(V)$, $q_d$ is a
scalar multiple of $\Psi(q_0,\ldots,q_{d-1})$.
\end{lm}

\begin{proof}
For $x \in V^*$, let $\phi_x:V \to U$ be the linear map $v \mapsto x(v) u$;
here $u$ is the vector used in the definition of $h$. Note that $x \mapsto
\phi_x$ is a $\GL(V)$-equivariant linear map $V^* \to \Hom_\Vec(V,U)$.

For every $t \in K$ define the linear map $\Phi_x(t):=
S^d(1_U \oplus t\phi_x): P'(V) \to S^d U$. The restriction of $\Phi_x(t)$
to the summand $S^{d-i} U \otimes S^i V$ equals $t^i \Phi_{x,i}$
where $\Phi_{x,i}$ is the composition of $S^{d-i} 1_U \otimes S^i
\phi_x: S^{d-i} U \otimes S^i V \to S^{d-i} U \otimes S^i U$ and the
multiplication map into $S^d U$. In particular, $\Phi_{x,0}$ is the
identity on $S^d U$ and $\Phi_{x,d}$ is the linear map $S^d V \to S^d U,
q_d \mapsto x^d(q_d) u^d$. Note that $V^* \to \Hom_\Vec(S^{d-i} U \otimes
S^i V, S^d U), x \mapsto \Phi_{x,i}$ is a $\GL(V)$-equivariant polynomial
map of (ordinary) degree $i$.

The functoriality of $X$ implies that $\Phi_x(t) X'(V) \subseteq X(U)$. In
particular, the pull-back of $f$ along $\Phi_x(t)$ to $P'(V)$ vanishes
on $X'(V)$. Take $q=(q_0,\ldots,q_d) \in P'(V)$. Then
\[ f(\Phi_x(t)(q_0 + q_1 + \cdots + q_{d-1} + q_d))\\
= f\left(q_0 + t \Phi_{x,1} q_1 + \cdots t^{d-1} \Phi_{x,d-1} q_{d-1} +
t^d x^d(q_d) u^d\right), \]
and this vanishes for $q \in X'(V)$. In particular, the coefficient of
$t^d$ in the Taylor expansion of this expression vanishes
for $q \in X'(V)$ . This coefficient equals
\[ x^d(q_d) \pafg[f]{u^d}(q_0) +
\Psi(x,q_0,\ldots,q_{d-1})= x^d(q_d) h(q_0) + \Psi(x,q_0,\ldots,q_{d-1}). \]
where the function $\Psi: V^* \times Q'(V) \mapsto K$ is
$\GL(V)$-invariant, and homogeneous of degree $d$ in its first argument
$x$.

For $q \in Z'(V)$ we have $h(q_0) \neq 0$ and hence
\[ x^d(q_d) = - \frac{1}{h(q_0)} \Psi(x,q_0,\ldots,q_{d-1}). \]
By Lemma~\ref{lm:Irred} the space $(S^d V)^*$ of coordinates on $S^d$
is spanned by the $x^d, x \in V^*$, and this shows that $Z'(V) \to B(V)$
is a closed embedding. But it yields more: by Lemma~\ref{lm:Irred}, $\Psi$ factors as
\begin{eqnarray*}
V^*\times Q'(V) &\to& S^d V^* \times Q'(V)\\
(x,q') &\mapsto& (x^d,q')
\end{eqnarray*}
and a unique $\GL(V)$-invariant map $S^d V^* \times Q'(V) \to K$. We
denote the latter map also by $\Psi$, which is now linear in its first
argument. If we re-interpret $\Psi$ as $\GL(V)$-equivariant polynomial
map $Q'(V) \to S^d V$, then for $q \in Z'(V)$ we have
\[ q_d = - \frac{1}{h(q_0)} \Psi(q_0,\ldots,q_{d-1}). \]
In particular, for $q \in Z'(V)$ we have $q_d \in \im \Psi$.
\end{proof}

\subsection{Covariants}

A {\em covariant} of $Q'(V)$ (of order $S^d V$, the only order that
we will use) is a $\GL(V)$-equivariant polynomial map $Q'(V) \to
S^d V$. So the map $\Psi$ constructed in~\ref{ssec:Psi}
is a covariant. For each integer $i\in[d-1]$, choose a basis $u_{i,1},\dots,u_{i,n_i}$ of $S^{d-i}U$. Then the map
\begin{eqnarray*}
\Phi\colon \bigoplus_{i=0}^{d-i}\left(S^iV\right)^{\oplus n_i}&\to&\bigoplus_{i=1}^{d-1} S^{d-i} U \otimes S^i V\\
(w_{i,j})_{i,j}&\mapsto&\left(\sum_{j=1}^{n_i}u_{i,j}\otimes w_{i,j}\right)_{i=1}^{d-1}
\end{eqnarray*}
is a $\GL(V)$-equivariant isomorphism and the following lemma holds.

\begin{lm} \label{lm:Covariants}
Let $\Psi\colon Q'(V)\to S^dV$ be a covariant. Then the composition $\Psi\circ(\id_{S^dU},\Phi)$ is of the form
\begin{eqnarray*}
S^dU\oplus\bigoplus_{i=1}^{d-1}\left(S^iV\right)^{\oplus n_i}&\to&S^dV\\
\left(q,(w_{i,j})_{i,j}\right)&\mapsto&\sum_{\substack{\alpha_{i,j}\in\ZZ_{\geq0}\\\sum_{i=1}^{d-1}\sum_{j=1}^{n_i}i\cdot\alpha_{i,j}=d}}p_{\alpha}(q)\cdot \prod_{i=1}^{d-1}\prod_{j=1}^{n_i}w_{i,j}^{\alpha_{i,j}}
\end{eqnarray*}
for some polynomial functions $p_{\alpha}\colon S^dU\to K$.
\end{lm}
\begin{proof}
Polynomial $\GL(V)$-equivariant maps
$$
S^dU\oplus\bigoplus_{i=1}^{d-1}\left(S^iV\right)^{\oplus n_i}\to S^dV
$$
correspond one-to-one to linear $\GL(V)$-equivariant maps
$$
K[S^dU]\otimes\bigoplus_{\substack{\alpha_{i,j}\in\ZZ_{\geq0}\\\sum_{i=1}^{d-1}\sum_{j=1}^{n_i}i\cdot\alpha_{i,j}=d}}\bigotimes_{i=1}^{d-1}\bigotimes_{j=1}^{n_i}S^{\alpha_{i,j}}S^iV\to S^dV
$$
by the universal properties of tensor products and symmetric powers. For each $\alpha$, the vector space
$$
\Hom_{\GL(V)}\left(\bigotimes_{i=1}^{d-1}\bigotimes_{j=1}^{n_i}S^{\alpha_{i,j}}S^iV,S^dV\right)
$$
is one-dimensional and consists of multiples of the homomorphism $\ell_{\alpha}$ sending 
$$
\bigotimes_{i=1}^{d-1}\bigotimes_{j=1}^{n_i}w_{i,j,1}\cdot\dots\cdot w_{i,j,\alpha_{i,j}}\mapsto \prod_{i=1}^{d-1}\prod_{j=1}^{n_i}w_{i,j,1}\cdot\dots\cdot w_{i,j,\alpha_{i,j}}.
$$
Hence the set of linear $\GL(V)$-equivariant maps
$$
K[S^dU]\otimes\bigoplus_{\substack{\alpha_{i,j}\in\ZZ_{\geq0}\\\sum_{i=1}^{d-1}\sum_{j=1}^{n_i}i\cdot\alpha_{i,j}=d}}\bigotimes_{i=1}^{d-1}\bigotimes_{j=1}^{n_i}S^{\alpha_{i,j}}S^iV\to S^dV
$$
is spanned as a $K[S^dU]$-module by the set of all $\ell_{\alpha}$. The corresponding statement for polynomial $\GL(V)$-equivariant maps is the statement of the lemma. 
\end{proof}

\subsection{Conclusion of the proof}

\begin{proof}[Proof of Theorem~\ref{thm:Poly}]
By the induction hypothesis and Lemma~\ref{lm:XYZ}, to bound the
strength of elements of $X(V)$ for all $V \in \Vec$ it suffices to
bound the strength of elements of $Z'(V)$ for all $V \in \Vec$. By
Lemma~\ref{lm:Chopping}, it suffices to bound the strength of
$q_d$ over all $q=(q_0,\ldots,q_d) \in Z'(V)$. By Lemma \ref{lm:Psi}, 
we know that such a $q_d$ is contained in the image of a covariant. 
So using Lemma \ref{lm:Covariants}, we see that $q_d$ is a linear 
combination of products of polynomials $w_{i,j}\in S^iV$ where $i$ ranges over $[d-1]$ 
and $j$ ranges over $[\dim S^{d-i}U]$. Since each of those products has degree $d$ and since, for each pair $(i,j)$, the polynomial $w_{i,j}$ has degree $i$, we see that each of the products is divisible by $w_{i,j}$ for some $i\leq d/2$. We find that the strength of $q_d$ is at most
$$
\#\{w_{i,j}\mid i\in\{1,\dots,\lfloor d/2\rfloor\},j\in[\dim S^{d-i}U]\}\leq\sum_{i=1}^{\lfloor d/2\rfloor}\dim S^{d-i}U
$$
and this bounds the strength of $q_d$ independently of $V$.
\end{proof}

\begin{re} \label{re:Explicit}
It follows from the induction that $N$ from Theorem~\ref{thm:Poly}
can be taken equal to
\begin{equation} \label{eq:Bound}
\dim U +\sum_{i=1}^{\lfloor d/2\rfloor}\dim S^{d-i}U.
\end{equation}
\end{re}

\section{Proof for alternating tensors} \label{sec:AltTensor}

In this section we adapt the proof from Section~\ref{sec:Poly} to a
proof of Theorem~\ref{thm:AltTensor} for alternating tensors.
Throughout this section, we assume that $\cha K=0$ or $\cha K>d$, we will let $X$
be a closed subset of $\Wedge^d$ and $U$ a finite-dimensional
 vector space such that $X(U) \subsetneq \Wedge^dU$. Note that
 $\dim U\geq d$ as $\bigwedge^dU\neq\emptyset$.

\subsection{Irreducibility}

Note that for any $V\in\Vec$ the $\GL(V)$-modules $\Wedge^d(V^*)$ and $(\Wedge^d V)^*$ are isomorphic. The analogue of Lemma~\ref{lm:Irred} is as follows.

\begin{lm} \label{lm:AltIrred} For each $V \in \Vec$, the $\GL(V)$-module
$\bigwedge^dV$ is irreducible and linearly spanned by its closed subvariety
$P := \{v_1\wedge\ldots\wedge v_d \mid v_1,\ldots,v_d \in V\mbox{ linearly independent}\}
 \subseteq \bigwedge^dV$. Furthermore, any $\GL(V)$-equivariant multilinear and alternating map
 from $V^d$ into a $\GL(V)$-module $N$ on which $t 1_V$ acts via
multiplication with $t^d$ extends uniquely to a
$\GL(V)$-equivariant linear map $\bigwedge^dV \to N$.\hfill $\square$
\end{lm}

\subsection{Homogeneity}
We equip the coordinate ring $K[\bigwedge^dV]$ with the $d\cdot\ZZz$-grading
 in which the elements of $(\bigwedge^dV)^*$ have degree $d$. For any closed $X\subseteq
\bigwedge^d$ we find, from the fact that $X(V)$ is stable under $\GL(V)$,
that the ideal $\cI(X(V))\subseteq K[\bigwedge^dV]$ is $\GL(V)$-stable
and in particular homogeneous. We define
$$
\delta_X:=\min_{f\in\cI(X(U))\setminus\{0\}}\deg f \in d\ZZz.
$$

\subsection{Induction}
If $\delta_X=0$, then we find that $X(V)=\emptyset$ for all $V\in\Vec$. We may
therefore assume that $\delta_X>0$ and we proceed by induction,
assuming that the theorem holds for all $Y\subseteq \bigwedge^d$ with $Y(U)\subsetneq\Wedge^dU$ and $\delta_Y < \delta_X$.

\subsection{Derivative}
Let $f\in \cI(X(U))\setminus\{0\}$ be a homogeneous polynomial of degree $\delta_X$.
Then, there exists an $r \in \bigwedge^dU$ such that
the directional derivative $h := \frac{\partial f}{\partial r}$ is not the
zero polynomial. By Lemma~\ref{lm:AltIrred} we may assume that
$r=u_1\wedge\ldots\wedge u_d$ for some linearly
independent $u_1,\ldots,u_d \in U$.

We define $Y \subsetneq \Wedge^d$ by
\[ Y(V):=\left\{q \in X(V) ~\middle|~ \forall \phi \in \Hom_\Vec(V,U):
 h((\bigw^d \phi) q)=0 \right\} \]
and note that, by the induction hypothesis, the theorem holds for $Y$. We
define
\[ Z(V) := X(V) \setminus Y(V)\]
and prove that all elements of $Z(V)$ have bounded strength independent of $V$.

\subsection{Shifting}
For $V\in\Vec$ we define
\begin{align*}
P'(V)&:=\bigw^d (U \oplus V)=\bigoplus_{i=0}^d\bigw^{d-i}U \otimes \bigw^i V, \\
X'(V)&:=X(U \oplus V) \subseteq P'(V), \\
Z'(V)&:= \left\{q \in X'(V) ~\middle|~ h((\bigw^d(1_U \oplus 0_{V \to U}))q) \neq 0\right\}.
\end{align*}
We think of $P'(V),X'(V)$ as varieties over $\Wedge^d
U,X(U)$, respectively, via the linear map
$\Wedge^d(1_U\oplus 0_{V \to U})$ and we will write $h$ for
$h\circ \Wedge^d(1_U \oplus 0_{V \to U})$.

\begin{lm} \label{lm:XYZAlt}
We have
$$
\sup_{V\in\Vec}\sup_{q\in X(V)} S(q) =
\sup_{V\in\Vec}\sup\left\{\sup_{q\in Y(V)}S(q),\sup_{q \in
Z'(V)}S(q)\right\}.
$$ \hfill $\square$
\end{lm}

\subsection{Chopping}

\begin{lm} \label{lm:AltChopping}
For $q \in P'(V)$ write $q= q_0 + \ldots + q_d$ with $q_i
\in \Wedge^{d-i}U \otimes \Wedge^iV$. Then $$S(q)\leq \dim U+S(q_d).$$\hfill\qed
\end{lm}

\subsection{Embedding}

Define
$$
Q'(V):=P'(V)/ \bigw^dV = \bigoplus_{i=0}^{d-1}\bigw^{d-i}U \otimes \bigw^iV
$$
and write $\pi(V):P'(V) \to Q'(V)$ for the natural
projection. Let
$$
B(V):=\{q \in Q'(V) \mid h(t) \neq 0\}=\{(q_0,\dots,q_{d-1}) \in Q'(V) \mid h(q_0) \neq 0\},
$$
an open set in $Q'(V)$.  Then $\pi(V)$ maps $Z'(V)$ into
$B(V)$ (and this is a closed embedding by \cite[Lemma 7]{Draisma17} and
Lemma~\ref{lm:AltIrred}).

\subsection{An equivariant map back}\label{sssec:PsiAltTensor}

\begin{lm} \label{lm:PsiAltTensor}
There exists a $\GL(V)$-equivariant polynomial map
$$
\Psi:Q'(V) \to \bigw^dV
$$
such that, for all $q= (q_0,\ldots,q_d) \in Z'(V)$, $q_{d}$
is a scalar multiple of $\Psi(q_0,\ldots,q_{d-1})$.
\end{lm}

\begin{proof} For $x = (x_1,\ldots,x_d) \in (V^*)^d$, let $\phi_{x,j}: V \to U$ be the linear map $v \mapsto x_j(v)u_j$; here $u_1,\ldots,u_d$ are the vectors used in the definition of $h$. Note that $x \mapsto \phi_{x,j}$ is a $\GL(V)$-equivariant linear map $(V^*)^d \to \Hom_{\Vec}(V,U)$. For all $t_1,\dots,t_d\in K$ define the linear map
$$
\Phi_x(t) := \bigw^d(1_U \oplus \textstyle{\sum}_{j=1}^dt_j\phi_{x,j}) : P'(V) \to \Wedge^dU.
$$
Denote the restriction of $\Phi_x(t)$ to the summand $\Wedge^{d-i}U \otimes \Wedge^i V$ by $\Phi_{x,i}$. Note that $\Phi_{x,0}$ is the identity on $\Wedge^dU$ and $\Phi_{x,d}$ is the linear map
\begin{eqnarray*}
\bigw^dV &\to& \bigw^dU\\
v_1\wedge\ldots\wedge v_d &\mapsto& t_1\dots t_d\cdot(\textstyle{\sum}_{j=1}^d\phi_{x,j}(v_1))\wedge\ldots\wedge (\textstyle{\sum}_{j=1}^d\phi_{x,j}(v_d))
\end{eqnarray*}
where the latter is a multiple of $u_1\wedge\ldots \wedge u_d$. Also note that $x\mapsto\Phi_{x,i}$ is a $\GL(V)$-equivariant polynomial map of (ordinary) degree $i$ and that $x\mapsto\Phi_{x,d}$ is multilinear and alternating.

By functoriality of $X$, we have $\Phi_x(t)(X'(V)) \subseteq X(U)$, and for $q = (q_0,\ldots,q_d) \in P'(V)$ we find that
$$
f(\Phi_x(t)(q_0+\ldots+q_d)) = f\left(q_0 + \Phi_{x,1}q_1 + \ldots + \Phi_{x,d-1}q_{d-1} + t_1\dots t_d\cdot(\bigw^d\textstyle{\sum}_{j=1}^d\phi_{x,j})q_d\right),
$$
and this expression vanishes for $q \in X'(V)$. The coefficient of $t_1\dots t_d$ in the Taylor expansion of this expression equals
$$
h(q_0)(x_1\wedge\dots\wedge x_d)q_d + \Psi(x,q_0,\ldots,q_{d-1})
$$
where the function $\Psi: (V^*)^d \times Q'(V) \to K$ is $\GL(V)$-invariant and multilinear in~$(V^*)^d$. We note that for $q \in Z'(V)$, we have $h(q_0) \neq 0$ by definition of $Z'(V)$, and therefore
$$
(x_1\wedge\dots\wedge x_d)q_d = -\frac{1}{h(q_0)}\Psi(x,q_0,\ldots,q_{d-1}).
$$
The map $\Psi$ factors as
\begin{eqnarray*}
(V^*)^d\times Q'(V)&\rightarrow&(V^*)^{\otimes d}\times Q'(V)\\
(x,q')&\mapsto&(x_1\otimes\dots\otimes x_d,q')
\end{eqnarray*}
and a unique $\GL(V)$-equivariant map $(V^*)^{\otimes d}\times Q'(V)\rightarrow K$. If we re-interpret $\Psi$ as a $\GL(V)$-equivariant polynomial map $Q'(V)\to V^{\otimes d}$ and compose the map with the projection $V^{\otimes d}\to \Wedge^d V$, then we get a map $Q'(V)\to\Wedge^dV$ which we also denote by $\Psi$. We see that
$$
d!\cdot q_d = -\frac{1}{h(q_0)}\Psi(q_0,\ldots,q_{d-1}).
$$
for all $q\in Z'(V)$, since
$$
(x_1\wedge\dots\wedge x_d)q_d=(x_1\otimes\dots\otimes x_d)\iota(q_d)
$$
where $\iota$ is the $\GL(V)$-equivariant map
\begin{eqnarray*}
\iota\colon \bigw^dV&\rightarrow& V^{\otimes d}\\
v_1\wedge\dots\wedge v_d&\mapsto&\sum_{\sigma\in S_d}\mathrm{sgn}(\sigma)\cdot v_{\sigma(1)}\otimes\dots\otimes v_{\sigma(d)}
\end{eqnarray*}
\end{proof}

\subsection{Covariants}
A {\em covariant} of $Q'(V)$ (of order $S^d V$, the only order that
we will use) is a $\GL(V)$-equivariant polynomial map $Q'(V) \to
\bigw^d V$. So the map $\Psi$ constructed in~\ref{sssec:PsiAltTensor}
is a covariant. For each integer $i\in[d-1]$, choose a basis $u_{i,1},\dots,u_{i,n_i}$ of $\bigw^{d-i}U$. Then the map
\begin{eqnarray*}
\Phi\colon \bigoplus_{i=0}^{d-i}\left(\bigw^iV\right)^{\oplus n_i}&\to&\bigoplus_{i=1}^{d-1} \bigw^{d-i} U \otimes \bigw^i V\\
(w_{i,j})_{i,j}&\mapsto&\left(\sum_{j=1}^{n_i}u_{i,j}\otimes w_{i,j}\right)_{i=1}^{d-1}
\end{eqnarray*}
is a $\GL(V)$-equivariant isomorphism and the following lemma holds.

\begin{lm} \label{lm:AltCovariants}
Let $\Psi\colon Q'(V)\to \bigw^dV$ be a covariant. Then the composition $\Psi\circ(\id_{\bigwedge^dU},\Phi)$ is of the form
\begin{eqnarray*}
\bigw^dU\oplus\bigoplus_{i=1}^{d-1}\left(\bigw^iV\right)^{\oplus n_i}&\to&\bigw^dV\\
\left(q,(w_{i,j})_{i,j}\right)&\mapsto&\sum_{\substack{\alpha_{i,j}\in\ZZ_{\geq0}\\\sum_{i=1}^{d-1}\sum_{j=1}^{n_i}i\cdot\alpha_{i,j}=d}}p_{\alpha}(q)\cdot \bigwedge_{i=1}^{d-1}\bigwedge_{j=1}^{n_i}\bigwedge_{\ell=1}^{\alpha_{i,j}}w_{i,j}
\end{eqnarray*}
for some polynomial functions $p_{\alpha}\colon \bigw^dU\to K$. \hfill $\square$
\end{lm}

\subsection{Conclusion of the proof}

\begin{proof}[Proof of Theorem~\ref{thm:AltTensor}.]
To bound the strength of elements of $X(V)$ independently of $V$,
by the induction assumption applied to $Y$, it suffices to bound the
strength of elements of $Z(V)$ independently of $V$. Lemma~\ref{lm:XYZAlt}, which focusses the attention to $Z'$, and
Lemma~\ref{lm:AltChopping} together reduce this problem further to bounding the strength of elements $q_d$ for all $(q_0,\dots,q_d)\in Z'(V)$ independently of $V$. Lemma~\ref{lm:PsiAltTensor} shows that such a $q_d$ is contained in the image of a covariant. So Lemma~\ref{lm:AltCovariants} implies that the strength of $q_d$ is bounded by
$$
\sum_{i=1}^{\lfloor d/2\rfloor}\dim\bigw^{d-i}U,
$$
which completes the proof.
\end{proof}

\begin{re} \label{re:ExplicitAlt}
It follows from the induction that $N$ from Theorem~\ref{thm:AltTensor}
can be taken equal to
\begin{equation} \label{eq:BoundAlt}
\dim U + \sum_{i=1}^{\lfloor d/2\rfloor}\dim \bigw^{d-i} U.
\end{equation}
\end{re}

\section{Proof for ordinary tensors} \label{sec:Tensor}

In this section we adapt the proof from Section~\ref{sec:Poly} to a
proof of Theorem~\ref{thm:Tensor} for ordinary tensors.
Throughout this section, we will let $X$
be a closed subset of $T^d$ and $U\in\Vec^d$ a tuple of finite-dimensional
 vector spaces such that $X(U) \subsetneq T^dU$.

\subsection{Homogeneity}
We equip the coordinate ring $K[T^dV]$ with the $d\cdot\ZZz$-grading in which the elements of
$(T^dV)^*$ have degree $d$. For any closed $X\subseteq T^d$
we find, from the fact that $X(V)$ is stable under
$\GL(V)$, that the ideal $\cI(X(V))\subseteq K[T^dV]$ is stable under $\GL(V)$ and in particular
homogeneous. We define
$$
\delta_X:=\min_{f\in\cI(X(U))\setminus\{0\}}\deg f\in\ZZz.
$$

\subsection{Induction}
If $\delta_X=0$, then we find that $X(V)=\emptyset$ for all $V\in\Vec^d$. We
may therefore assume that $\delta_X>0$ and we proceed by
induction, assuming that the theorem holds for all $Y\subseteq T^d$ with
$Y(U)\subseteq T^dU$ and $\delta_Y<\delta_X$.

\subsection{Derivative}
Let $f\in \cI(X(U))\setminus\{0\}$ be a homogeneous polynomial of degree $\delta_X$.
Then, there exists an $r \in T^dU$ such that
the directional derivative $h := \frac{\partial f}{\partial r}$ is not the
zero polynomial and we may assume that $r=u_1\otimes\ldots\otimes u_d$ for some $u_i\in U_i$.

We define the closed subset $Y\subsetneq T^d$ by
$$
Y(V):=\left\{q \in X(V) ~\middle|~ \forall \phi_i \in
\Hom_\Vec(V_i,U_i): h((T^d\phi) q)=0 \right\}
$$
and note that, by the induction hypothesis, the theorem holds for $Y$. We
define
$$
Z(V):=X(V) \setminus Y(V)
$$
and prove that all elements in $Z(V)$ have bounded strength independent of $V$.

\subsection{Shifting}
For $V\in\Vec^d$ we define
\begin{align*}
P'(V)&:=T^d (U \oplus
V)=\bigoplus_{J\subseteq[d]}\left(\bigotimes_{j\in[d]\setminus
J}U_j\otimes\bigotimes_{j\in J}V_j\right), \\
X'(V)&:=X(U \oplus V) \subseteq P'(V), \\
Z'(V)&:=\left\{q \in X'(V) ~\middle|~ h((T^d(1_{U_i} \oplus 0_{V_i \to U_i})_i)q) \neq 0\right\}.
\end{align*}
We think of $P'(V),X'(V)$ as varieties over $T^d
U,X(U)$, respectively, via the linear map $T^d(1_{U_i} \oplus 0_{V_i \to U_i})_i$. We will write $h$ for $h\circ T^d(1_{U_i} \oplus 0_{V_i \to U_i})_i$.

\begin{lm}
We have
$$
Z(U \oplus V) = \bigcup_{g \in \GL(U\oplus V)} g Z'(V).
$$
In particular, $\sup_{q \in Z(U \oplus V)} S(q) = \sup_{q\in Z'(V)} S(q)$.\hfill\qed
\end{lm}

\begin{lm}\label{lm:XYZTensor}
We have
$$
\sup_{V\in\Vec^d}\sup_{q\in X(V)} S(q) =
\sup_{V\in\Vec^d}\sup\left\{\sup_{q\in
Y(V)}S(q),\sup_{q \in Z'(V)}S(q)\right\}.
$$
\hfill\qed
\end{lm}

\subsection{Chopping}
We write $n_{\ell}:=\dim U_{\ell}$ for $\ell=1,\dots,d$.

\begin{lm} \label{lm:TensorChopping}
For $q \in P'(V)$ write $q=\sum_{J\subseteq[d]}q_J$ with
$q_J\in\bigotimes_{j\in[d]\setminus J}U_j\otimes\bigotimes_{j\in J}V_j$. Then
$$
S(q)\leq n_1 + \dots + n_d + S(q_{[d]}).
$$
\hfill\qed
\end{lm}

\subsection{Embedding}

Define
$$
Q'(V):=P'(V)/ T^dV =
\bigoplus_{J\subsetneq[d]}\left(\bigotimes_{j\in[d]\setminus
J}U_j\otimes\bigotimes_{j\in J}V_j\right)
$$
and write $\pi(V):P'(V) \to Q'(V)$ for the natural projection. Let
$$
B(V):=\{q \in Q'(V) \mid h(q) \neq 0\}=\{(q_J)_{J\subsetneq[d]} \in Q'(V) \mid h(q_{\emptyset}) \neq 0\},
$$
an open set in $Q'_U(V)$.  Then $\pi(V)$ maps $Z'(V)$ into $B(V)$.

\subsection{An equivariant map back}\label{sssec:PsiTensor}

\begin{lm} \label{lm:PsiTensor}
There exists a $\GL(V)$-equivariant polynomial
map
$$
\Psi:Q'(V) \to T^dV
$$
such that, for all $q=(q_J)_{J\subseteq[d]} \in Z'(V)$,
$q_{[d]}$ is a scalar multiple of $\Psi(q_J)_{J\subsetneq[d]}$.
\end{lm}
\begin{proof} For $x = (x_1,\ldots,x_d) \in V_1^*\times\dots\times V_d^*$, let $\phi_{x,i}: V_i \to U_i$ be the linear map $v_i \mapsto x_i(v_i)u_i$; here $u_1,\ldots,u_d$ are the vectors used in the definition of $h$. Note that $x \mapsto \phi_{x,i}$ is a $\GL(V)$-equivariant linear map.

For all $t_1,\dots,t_d\in K$ define the linear map $\Phi_x(t) := T^d(1_{U_i} \oplus t_i\phi_{x,i})_i : P'(V) \to T^dU$. The restriction of $\Phi_x(t)$ to the summand
$$
\bigotimes_{j\in[d]\setminus J}U_j \otimes \bigotimes_{j\in J} V_j
$$
equals $\prod_{i\in J}t_i\cdot \Phi_{x,J}$ where $\Phi_{x,J}$ is the map $\bigotimes_{j\in[d]\setminus J}1_{U_j} \otimes \bigotimes_{j\in J} \phi_{x,j}$. Note that $x\mapsto\Phi_{x,J}$ is a $\GL(V)$-equivariant polynomial map of (ordinary) degree $|J|$ and that $x\mapsto\Phi_{x,[d]}$ is multilinear.

By functoriality of $X$, we have $\Phi_x(t)(X'(V)) \subseteq X(U)$, and for $q = (q_J)_{J\subseteq[d]} \in P'(V)$ we find that
$$
f(\Phi_x(t)(q_J)_{J\subseteq[d]}) = f\left(\sum_{J\subsetneq[d]}\left(\prod_{i\in J}t_i\cdot \Phi_{x,J}q_J\right) + t_1\dots t_d\cdot (T^d\phi_x)q_{[d]}\right),
$$
and this expression vanishes for $q \in X'(V)$. The coefficient of $t_1\dots t_d$ in the Taylor expansion of this expression equals
$$
h(q_{\emptyset})(x_1\otimes\dots\otimes x_d)q_{[d]} + \Psi(x,(q_J)_{J\subsetneq[d]})
$$
where the function $\Psi: V_1^*\times\dots\times V_d^* \times Q'(V) \to K$ is $\GL(V)$-invariant and multilinear in $V_1^*\times\dots\times V_d^*$. We note that for $q \in Z'(V)$, we have $h(q_{\emptyset}) \neq 0$ by definition of $Z'(V)$, and therefore
$$
(x_1\otimes\dots\otimes x_d)q_{[d]} = -\frac{1}{h(q_{\emptyset})}\Psi(x,(q_J)_{J\subsetneq[d]}).
$$
The map $\Psi$ factors as the composition of
\begin{eqnarray*}
V_1^*\times\dots\times V_d^*\times Q'(V)&\rightarrow&(V_1^*\otimes\dots\otimes V_d^*)\times Q'(V)\\
(x,q')&\mapsto&(x_1\otimes\dots\otimes x_d,q')
\end{eqnarray*}
and a unique $\GL(V)$-equivariant map $(V_1^*\otimes\dots\otimes V_d^*)\times Q'(V)\rightarrow K$.
We denote the latter map also by $\Psi$, which is now linear in its first argument.
If we re-interpret $\Psi$ as a $\GL(V)$-equivariant polynomial map $Q'(V)\to T^dV$, then
$$
q_{[d]} = -\frac{1}{h(q_{\emptyset})}\Psi(q_J)_{J\subsetneq[d]}.
$$
for all $q\in Z'(V)$.
\end{proof}

\subsection{Covariants}

A {\em covariant} of $Q'(V)$ is a $\GL(V)$-equivariant polynomial
map $Q'(V) \to T^dV$. So the map $\Psi$ constructed
in~\ref{sssec:PsiTensor} is a covariant.
For each non-empty subset $j\subsetneq[d]$, choose a basis $u_{J,1},\dots,u_{J,n_J}$ of $\bigotimes_{j\in[d]\setminus J}U_j$. Then the map
\begin{eqnarray*}
\Phi\colon\bigoplus_{\substack{J\subsetneq[d]\\J\neq\emptyset}}\left(\bigotimes_{j\in J}V_j\right)^{\oplus n_J}&\to&\bigoplus_{\substack{J\subsetneq[d]\\J\neq\emptyset}}\left(\bigotimes_{j\in[d]\setminus
J}U_j\otimes\bigotimes_{j\in J}V_j\right)\\
(w_{J,\ell})_{J,\ell}&\mapsto&\left(\sum_{\ell=1}^{\prod_{j\in[d]\setminus J}\dim U_j}u_{J,\ell}\otimes w_{J,\ell}\right)_J.
\end{eqnarray*}
is a $\GL(V)$-equivariant isomorphism and the following lemma holds.

\begin{lm} \label{lm:TensorCovariants}
Let $\Psi\colon Q'(V)\to T^dV$ be a covariant. Then the composition $\Psi\circ(\id_{T^dU},\Phi)$ is of the form
\begin{eqnarray*}
T^dU\oplus\bigoplus_{\substack{J\subsetneq[d]\\J\neq\emptyset}}\left(\bigotimes_{j\in J}V_j\right)^{\oplus n_J}&\to&T^dV\\
\left(q,(w_{J,\ell})_{J,\ell}\right)&\mapsto&\sum_{\{J_1,\dots,J_k\}\in\mathcal{J}}\sum_{\ell_1\in[n_{J_1}]}\dots\sum_{\ell_k\in[n_{J_k}]}p_{\{J_1,\dots,J_k\},\ell_1,\dots,\ell_k}(q)\cdot \bigotimes_{i=1}^kw_{J_i,\ell_i}
\end{eqnarray*}
for some polynomial functions 
$$
p_{\{J_1,\dots,J_k\},\ell_1,\dots,\ell_k}\colon T^dU\to K
$$
where $\mathcal{J}$ consists of all unordered partitions of $[d]$ into
nonempty sets, i.e., all collections $\{J_1,\dots,J_k\}$ of non-empty
subsets $J_i\subsetneq[d]$ with $J_i\cap J_{i'}=\emptyset$ if $i\neq i'$
and $\bigcup_{i=1}^kJ_i=[d]$. \hfill $\square$
\end{lm}

\subsection{Conclusion of the proof}

\begin{proof}[Proof of Theorem~\ref{thm:Tensor}.]
To bound the strength of elements of $X(V)$ independently of $V$,
by the induction assumption applied to $Y$, it suffices to bound the
strength of elements of $Z(V)$ independently of $V$.
Lemma~\ref{lm:XYZTensor} and Lemma~\ref{lm:TensorChopping} together reduce this problem further to bounding the strength of elements $q_{[d]}$ for all $(q_J)_{J\subseteq[d]}\in Z'(V)$ independently of $V$. Lemma~\ref{lm:PsiTensor} shows that such a $q_{[d]}$ is contained in the image of a covariant. 
So using Lemma \ref{lm:TensorCovariants}, we see that $q_{[d]}$ is a linear 
combination of tensor products of elements $w_{J,\ell}\in \bigotimes_{j\in J}V_j$ where $J$ ranges over non-empty subsets of $[d]$ and $j$ ranges from $1$ to $\prod_{j\in[d]\setminus J}\dim U_j$. Fix an integer $m\in[d]$. Then we note for each $\{J_1,\dots,J_k\}\in\mathcal{J}$ that $m\in J_i$ for some $i\in[k]$. So the strength of $q_{[d]}$ is at most
\begin{eqnarray*}
&&\#\left\{w_{J,\ell}\middle|m\in J\subsetneq[d],\ell\in\left[\prod_{j\in[d]\setminus J}\dim U_j\right]\right\}\\
&\leq &\dim U_m\cdot \sum_{J\subsetneq([d]\setminus\{m\})}\prod_{j\in J}\dim U_j\\
&=&\dim U_m \cdot \left(\prod_{j \in [d]\setminus\{m\}} (\dim U_j+1) - \prod_{j \in [d]\setminus\{m\}} \dim U_j\right)\\
&=&\prod_{j \in [d]} (\dim U_j + 1)- \prod_{j \in[d]\setminus\{m\}} (\dim U_j + 1) - \prod_{j\in [d]} \dim U_j
\end{eqnarray*}
and the latter expression is minimized over $m$ when $\prod_{j \in[d]\setminus\{m\}} (\dim U_j + 1)$ is maximal, which happens exactly when $\dim U_m$ is minimal. This bounds the strength of $q_{[d]}$ independently of $V$.
\end{proof}

\begin{re} \label{re:ExplicitTensor}
It follows from the induction that $N$ from Theorem~\ref{thm:Tensor}
can be taken equal to
\begin{equation} \label{eq:BoundTensor}
n_1+\dots+n_d+ n_m \prod_{j \in [d]\setminus\{m\}}(n_j+1) - \prod_{j \in [d]}n_j,
\end{equation}
where $n_\ell=\dim U_\ell$ and where $n_m \leq n_{\ell}$ for all $\ell \in [d]$.
\end{re}

\begin{re}
The definitions of strength we have used have the following common generalisation:
For integers $0\leq m\leq n$, $d_1,\dots,d_n\in\NN$ with $\sum_id_i\geq2$, and
vector spaces $V_1,\dots,V_n\in\Vec$, the strength $S(q)$ of a composite tensor
\[q\in S^{d_1}V_1\otimes\dots\otimes S^{d_m}V_m\otimes\bigw^{d_{m+1}}V_{m+1}
\otimes\dots\otimes\bigw^{d_n}V_n\]
is the minimal number $k$ of terms in any composition of the form
\[q=r_1 s_1 + \cdots + r_k s_k\]
where
\begin{eqnarray*}
r_i&\in&S^{e_1}V_1\otimes\dots\otimes S^{e_m}V_m\otimes\bigw^{e_{m+1}}V_{m+1}
\otimes\dots\otimes\bigw^{e_n}V_n\\
s_i&\in&S^{d_1-e_1}V_1\otimes\dots\otimes S^{d_m-e_m}V_m\otimes\bigw^{d_{m+1}-e_{m+1}}V_{m+1}
\otimes\dots\otimes\bigw^{d_n-e_n}V_n
\end{eqnarray*}
for suitable $0\leq e_i\leq d_i$ with $(e_1,\dots,e_n)\neq(0,\dots,0),(d_1,\dots,d_n)$.

A version of the main theorem for composite tensors generalising the three versions exists.
A proof of this version can be obtained by modifying the proof in this section.
The most important changes are:
\begin{itemize}
\item[(a)] we must assume that $\cha K=0$ or $\cha K>d_i$ for all $i\in\{1,\dots,n\}$;
\item[(b)] we take $h:=\frac{\partial f}{\partial r}$ where $r=r_1\otimes\dots\otimes r_n$
with $r_i=u_i^{d_i}$, $u_i\in U_i$ for $i\leq m$ and $r_i=u_{i,1}\wedge\dots\wedge u_{i,d_i}$,
$u_{i,j}\in U_i$ for $i>m$; and
\item[(c)] for $i\leq m$ we take $x_i\in V_i^*$ and $t_i\in K$, for $i>m$ we take
$x_{i,j}\in V_i^*$ and $t_{i,j}\in K$ and we let
$\Phi_x(t)=T^d(\Phi_{x_1}^{(1)}(t_1),\dots,\Phi_{x_n}^{(n)}(t_n))$ where $\Phi_{x_i}^{(i)}(t_i)$
is the map from the symmetric case for $i\leq m$ and the map from the alternating case for $i>m$.
\end{itemize}
In addition, the bounds must be adjusted to (more complicated) expressions.
\end{re}

\section{Proof over $\ZZ$.} \label{sec:PolyZ}

\begin{proof}[Proof of Theorems~\ref{thm:PolyZ}, \ref{thm:AltTensorZ} and~\ref{thm:TensorZ}.]
For $P \in \{S^d,\Wedge^d,T^d\}$ we write $P_\ZZ$ for the version over
$\ZZ$ and $P_K$ for the version over $K$. As in the proofs over $K$,
let $U$ be a $\ZZ$-module for which $X_\ZZ(U) \neq P_\ZZ U$, and let $f$
be a nonzero, homogeneous polynomial with integral coefficients vanishing
on $X_\ZZ(U)$. For any field $K$ whose characteristic does not divide all
coefficients of $f$, note that $f$ specialises to a nonzero polynomial vanishing
on $X_K(U_K)$. As the specialisation of $f$ is defined over the prime field,
it follows that if $p=\cha K>0$ and all directional derivatives of $f$ are
zero over $K$, then $f$ is a $p$-th power, and one can replace it
by its $p$-th root. We let $h$ be the partial derivative of $f$ in the direction
of a vector in such a way such that $h$ and $\Psi$ are also defined over the prime
field. Now $\Psi$ has the required properties by Remark \ref{re:XoverZ} since
it has those properties over any infinite field $L\supseteq K$. It follows that
the bound from Remark~\ref{re:Explicit}, \ref{re:ExplicitAlt} or~\ref{re:ExplicitTensor}
applies. Note that this deals with all but finitely many
characteristics. For each of the remaining characteristics $p$, if $K$
is a field of characteristic $p$ such that $X_K \subsetneq P_K$, then
there exists a $U \in \Vec_\ZZ$ and an $f$ as above that specialises to
a nonzero polynomial vanishing on $X_K(U_K)$. So we can take for $N$ the
maximum of finitely many numbers of the form of Remark~\ref{re:Explicit},
 \ref{re:ExplicitAlt} or~\ref{re:ExplicitTensor}.
\end{proof}


\end{document}